\UseRawInputEncoding
\documentclass[12pt]{article}
\usepackage{amsmath,amssymb,amsthm,graphicx,hyperref,array,booktabs,microtype,longtable}
\usepackage[margin=0.9in]{geometry}
\newtheorem{theorem}{Theorem}[section]

\newtheorem{remark}[theorem]{Remark}
\newtheorem{lemma}[theorem]{Lemma}
\newtheorem{corollary}[theorem]{Corollary}
\newtheorem{proposition}[theorem]{Proposition}

\newtheorem{conjecture}[theorem]{Conjecture}
\DeclareRobustCommand{\sheet}[1]{\texttt{ancillary/#1.csv}}

\hypersetup{
    colorlinks=true,
    linkcolor=black,
    citecolor=black,
    urlcolor=blue,
}

\title{Exact Second-Order Zarankiewicz Numbers for Complete-Graph Incidence Families}
\author{Yannan Chen\footnote{School of Mathematical Sciences, South China Normal University, Guangzhou 510631, China ({\tt ynchen@scnu.edu.cn}).}
\and
Johan L\"ofberg\footnote{Division of Automatic Control, Link\"oping University, SE-581 83 Link\"oping, Sweden ({\tt johan.lofberg@liu.se}).}
\and
Liqun Qi\footnote{Jiangsu Provincial Scientific Research Center of Applied Mathematics, Nanjing 211189, China.
Department of Applied Mathematics, The Hong Kong Polytechnic University, Hung Hom, Kowloon, Hong Kong.
({\tt maqilq@polyu.edu.hk})}
}
\date{\today}

\begin{document}

\maketitle

\begin{abstract}
Let \(n\ge6\), \(m=\binom n2=\frac{n(n-1)}2\), and consider the complete-graph incidence family on \(K_n\): the columns are the vertices of \(K_n\), the rows its edges, and the one-edge graph is the incidence graph. The universal cell bound of L\"ofberg and Qi reads
\[
z_2(m,n)\le Z(n):=\left\lfloor\frac{n(n-1)(n+2)}4\right\rfloor
=\begin{cases}
\dfrac{n(n-1)(n+2)}4, & n\not\equiv3\pmod4,\\[1mm]
\dfrac{n(n-1)(n+2)-2}4, & n\equiv3\pmod4 .
\end{cases}
\]
We implement the nested one-factorization construction of that family and determine exactly what it certifies. For \(n=2q\) with \(q\) an odd prime the construction has no hole, every pair of factors \(F_a\cup F_b\) is a genuine Hamilton cycle, the cross-factor transfer equations apply verbatim, and \(z_2=z_{SL}=z_{RL}=q(2q-1)(q+1)\) for \(q\ge5\). For odd \(n=2p+1\) the near-perfect one-factorization yields Hamilton paths \(F_a\cup F_b\), and the virtual edge closing them into an odd cycle breaks the transfer argument: that scheme certifies only \(R(G_p)\le Z(n)\). The orders \(n=7,8,9,12,13,16,17,18,20,21\), at which neither regular branch is available, are settled by explicit configurations of a different shape: each attains the cell bound and satisfies \((\mathrm{RW}3^+)\), and each is obtained from a configuration of a larger order by deleting one or more vertex stars and repairing the restricted grid, the parent being certified except at \(n=7,16,17\): at \(n=7\) and \(n=16\) the grid is inherited from a larger configuration under a fresh pairing of its occupied cells, and the parent of \(n=16\) and \(n=17\) is the nested \(q=9\) witness, which fails \((\mathrm{RW}3^+)\) and is repaired by the operation. At every one of these orders \(z_2=z_{SL}=z_{RL}=Z(n)\), and the exact values are collected in Theorem~\ref{thm:unified}; all remaining orders are recorded as conjectures. The order \(n=7\), the first settled one with \(n\equiv3\pmod4\), is the only one whose grid has a hole, grounded by the zero-companion rule of Section~\ref{sec:prelim}. Machine-readable configurations and the certificate checker accompany the paper.
\end{abstract}

\textbf{Keywords:} biquadratic form; sum of squares; SOS rank; Zarankiewicz number; second order Zarankiewicz number; irreducible doubly simple form; \(C_4\)-free graph; perfect one-factorization; near-perfect one-factorization

\textbf{MSC:} 14P10; 05C35; 11E25; 15A69; 90C22

\section{Introduction}

The classical Zarankiewicz problem asks for the maximum number \(z(m,n)\) of \(1\)'s in an \(m\times n\) binary matrix containing no all-one \(2\times2\) submatrix, equivalently the maximum number of edges in a \(C_4\)-free bipartite graph with parts of sizes \(m\) and \(n\). The problem originates from Zarankiewicz \cite{za51}; early fundamental contributions include \cite{kst54,culik56,re58,gu69}. For modern accounts and recent exact values we refer to \cite{ni10,chm24,cc26mx4,qi1}, and for the general extremal theory of \(C_4\)-free graphs to \cite{bo04}.

The connection between the Zarankiewicz problem and the sum-of-squares (SOS) rank of biquadratic forms was developed in \cite{qi1,reproducibility}. L\"ofberg and Qi \cite{reproducibility} introduced the second order Zarankiewicz number \(z_2(m,n)\), the recursive-line Zarankiewicz number \(z_{RL}(m,n)\), and the signed Zarankiewicz number \(z_{SL}(m,n)\). These satisfy the unconditional hierarchy
\begin{equation}\label{eq:hierarchy}
\mathrm{BSR}(m,n)\ge z_2(m,n)\ge z_{SL}(m,n)\ge z_{RL}(m,n)\ge z_{wL}(m,n)\ge z(m,n),
\end{equation}
where \(\mathrm{BSR}(m,n)\) is the biquadratic SOS rank, \(z_{wL}\) is the weak limited augmented Zarankiewicz number, and \(z\) is the classical Zarankiewicz number.

In \cite{reproducibility}, the complete-graph incidence family was used to obtain a cubic asymptotic separation between \(z_2\) and \(z_{wL}\). For \(N=2q\) with \(q\) an odd prime, the columns are the vertices of \(K_{2q}\), the rows are its edges, and the one-edge graph is the incidence graph. Using a nested perfect one-factorization construction, L\"ofberg and Qi proved
\begin{equation}\label{eq:evencase}
z_2\left(\binom{2q}{2},2q\right)=q(2q-1)(q+1)
=\frac{N(N-1)(N+2)}4.
\end{equation}
For \(q\ge5\), the same construction satisfies the ordinary recursive-line criterion \((\mathrm{RW}3^+)\), hence
\[
z_{RL}\left(\binom{2q}{2},2q\right)=z_2\left(\binom{2q}{2},2q\right).
\]
For the exceptional case \(q=3\), ordinary propagation leaves odd-cycle components unresolved; the signed criterion \((RW3^\pm)\) was introduced to certify the \(15\times6\) witness. Recently \cite{ccq26sl} proved that the value at \(q=3\) is nevertheless the same one, \(z_{RL}(15,6)=60\), by exhibiting a different \(15\times6\) witness, a cyclic grid, whose closure does satisfy \((\mathrm{RW}3^+)\). Thus the signed criterion is no longer needed to determine any value in this family, and no separation of \(z_{SL}\) and \(z_{RL}\) arises at any order of it settled so far.

The purpose of the present paper is to treat the full complete-graph incidence family with \(m=\binom n2\) for all \(n\ge6\). The formula under study is
\begin{equation}\label{eq:main}
z_2\left(\binom n2,n\right)=z_{RL}\left(\binom n2,n\right)=\left\lfloor\frac{n(n-1)(n+2)}4\right\rfloor.
\end{equation}
The right-hand side is the universal cell bound of \cite[Proposition 3.2]{reproducibility}, and by Theorem~\ref{thm:unified} it is attained at the orders for which a configuration below is certified by \((\mathrm{RW}3^+)\); the remaining orders are collected in Conjectures~\ref{conj:odd} and \ref{conj:all-n}. The construction is the nested perfect one-factorization construction of \cite{reproducibility} on \(K_n\), with the following parity-dependent behavior:

\begin{itemize}
\item \emph{Even \(n=2q\).} For odd prime \(q\) the perfect one-factorization of \(K_{2q}\) gives genuine Hamilton cycles \(F_a\cup F_b\) for every pair of distinct factors, the construction has no hole, and the cross-factor transfer equations of \cite[\S A.3--A.4]{reproducibility} apply verbatim, so \((\mathrm{RW}3^+)\) holds for \(q\ge5\) (Proposition~\ref{prop:even}); the exceptional prime \(q=3\), that is \(n=6\), is certified only by the signed criterion \cite{reproducibility}, the value \(z_{RL}(15,6)=60\) itself having since been proved in \cite{ccq26sl} with a different witness. For even \(q\), and for odd composite \(q\), the construction needs a perfect one-factorization of \(K_{2q}\) together with a paired certificate; the factorization itself is not conjectural whenever \(2q-1\) is prime, extending each cyclic near-perfect matching on the \(2q-1\) vertices by the edge \(\{a,\infty\}\) at its missing vertex \(a\) giving one, which covers \(K_8\), \(K_{12}\), \(K_{18}\) and \(K_{20}\), and only the remaining orders are subject to the conjecture of Kotzig that every \(K_{2q}\) admits a perfect one-factorization \cite{kotzig64}. That range is reported here as conditional. The even orders of this branch that are nevertheless settled below are \(n=8,12,16,18,20\), by the explicit configurations of Appendix~\ref{app:sheets} (Corollary~\ref{cor:ten}); of these, \(n=18\) and \(n=20\) are obtained from the certified construction at \(n=22\) (\(q=11\)) by deleting stars and repairing the restricted grid, while \(n=16\) starts from the nested construction at \(q=9\), which fails \((\mathrm{RW}3^+)\) and is repaired by the same operation (Appendix~\ref{app:12016}); no hypothesis on the one-factorization of \(K_{2q}\) is used at these three orders.
\item \emph{Odd \(n=2p+1\).} The near-perfect one-factorization of \(K_{2p+1}\) gives paths \(F_a\cup F_b\) with endpoints \(a,b\); adding a virtual edge closes a Hamilton path into an odd cycle, and the cell counting bounds the displayed length by \(R(G_p)\le Z(n)\). This branch is \emph{not} certified. First, \(F_a\cup F_b\) is a Hamilton path precisely when \(\gcd(a-b,n)=1\), so all pairs of factors give a Hamilton path only for \(n\) prime; already at \(n=15\), \(a=0\) and \(b=5\) one gets a path and two cycles (Remark~\ref{rem:path}). Second, the closed walk is odd and does not alternate between the two factors around the virtual edge, so the transfer equations of the even case do not apply verbatim and the wraparound has to be re-derived with explicit boundary cases (Remark~\ref{rem:cross-odd-gaps}(i)). Third, for odd \(p\) exactly one non-incidence cell stays uncovered, and the claim that a chain reaching this hole is grounded is not a reachability proof for all unresolved inner products (Remark~\ref{rem:cross-odd-gaps}(ii)). For even \(p\) the non-incidence cells are even, no hole is available, and the intra-factor row pairing does not exist at all. What remains is the conjecture \(z_2=z_{SL}=z_{RL}=Z(n)\) for \(n=2p+1\) prime with \(p\ge5\) odd (Conjecture~\ref{conj:odd}). Independently of the scheme, the orders \(n=7,9,13,17,21\) are settled by explicit configurations of a different shape (Corollary~\ref{cor:ten} and Section~\ref{sec:examples}): at \(n=7\) by the \(21\times7\) configuration of Appendix~\ref{app:217}, whose grid is the one produced by deleting the star of a vertex of the \(28\times8\) configuration of Appendix~\ref{app:288}, with the occupied cells then paired afresh and the single hole grounded by the zero-companion rule of Section~\ref{sec:prelim}; at \(n=13\) by the \(78\times13\) configuration of Appendix~\ref{app:7813}, obtained from the certified construction at \(n=14\) by deleting the star of a vertex and repairing the orphaned cells; and at \(n=9,17,21\) by the configurations of Appendices~\ref{app:369}, \ref{app:13617} and \ref{app:21021}. The remaining odd orders are part of Conjecture~\ref{conj:all-n}.
\end{itemize}

The grid bookkeeping is parity dependent: \(H=0\) for even \(n\) and for \(n\equiv1\pmod4\), and \(H=1\) for \(n\equiv3\pmod4\). The upper bound is the cell bound of Proposition~\ref{prop:cell}. The matching lower bound is proved in \cite{reproducibility} for \(n=2q\) with \(q\) an odd prime and \(q\ge5\), and for the remaining orders it is established exactly at \(n=7,8,9,12,13,16,17,18,20,21\), the orders of Theorem~\ref{thm:unified}, by the explicit configurations of Appendix~\ref{app:sheets}; elsewhere it is stated as Conjecture~\ref{conj:odd} or Conjecture~\ref{conj:all-n}. At \(n=7\), the first settled order with \(n\equiv3\pmod4\), the configuration has the hole forced by the parity of Lemma~\ref{lem:counts-odd}, and its replay uses the zero-companion rule of Section~\ref{sec:prelim}.

The paper is organized as follows. Section~\ref{sec:prelim} recalls the recursive-line framework, proves the coefficient identities that all later certificates use, and records the cell bound and the classical Zarankiewicz value. Section~\ref{sec:even} treats the even case \(n=2q\), Section~\ref{sec:odd} the odd case \(n=2p+1\), and Section~\ref{sec:unified} combines them into the unified formula \eqref{eq:main}. Section~\ref{sec:examples} records the provenance of the ten configurations that settle the individual orders, and Section~\ref{sec:conclusion} and Section~\ref{sec:open} conclude. Appendix~\ref{app:sheets} fixes the conventions of the replay, collects the shapes of the ten configurations and the output of their replays in Tables~\ref{tab:shape} and~\ref{tab:certs}, and indexes the data sheets; Appendices~\ref{app:369}, \ref{app:288} and \ref{app:217} display the three smallest configurations in full, the seven larger ones being recorded cell by cell in the data sheets cited there. The source of that material is the checker \texttt{ancillary/certificate\_checker.py}: it audits each data sheet, replays the closure of \cite[Definitions 4.1 and 4.5]{reproducibility} and the zero-companion rule of Section~\ref{sec:prelim} to its least fixed point, and tests the terminal conditions, so that every number of Tables~\ref{tab:shape} and~\ref{tab:certs} can be reproduced from the tables alone.

\section{Preliminaries}\label{sec:prelim}

\subsection{Biquadratic forms and SOS rank}

Let \(P(\mathbf x,\mathbf y)\) be an \(m\times n\) biquadratic form. If there exist real bilinear forms \(f_1,\dots,f_r\) such that
\[
P=\sum_{t=1}^r f_t^2,
\]
then \(P\) is a sum of squares, and the minimum such \(r\) is the SOS rank \(\mathrm{SOS}(P)\). The biquadratic SOS rank \(\mathrm{BSR}(m,n)\) is the maximum SOS rank among all \(m\times n\) SOS biquadratic forms; such representations go back to the classical theory of sums of squares of real polynomials \cite{clz95}.

\subsection{Augmented bipartite configurations}

Let \(G=([m],[n],E_1\cup E_2)\) be an augmented bipartite configuration, where \(E_1\) is a set of one-edges (single cells) and \(E_2\) is a set of two-edges (unordered pairs of distinct cells). Let \(u_{ij}=x_iy_j\). The displayed SOS associated with \(G\) is
\[
P_G(\mathbf x,\mathbf y)=\sum_{(i,j)\in E_1}u_{ij}^2
+\sum_{\{(i,j),(k,\ell)\}\in E_2}(u_{ij}+u_{k\ell})^2.
\]
Write \(R(G)=|E_1|+|E_2|\). The configuration satisfies the simplicity condition \((S)\) if the supports of distinct selected edges are disjoint. It is limited if \(|E_1|=z(m,n)\), and irreducible if \(\mathrm{SOS}(P_G)=R(G)\).

\subsection{Coefficient identities and the soundness of the rules}\label{subsec:soundness}

Write \(c=(i,j)\) for a cell and \(u_c=x_iy_j\), so that the displayed SOS of a configuration \(G=([m],[n],E_1\cup E_2)\) reads
\begin{equation}\label{eq:PG}
P_G=\sum_{c\in E_1}u_c^2+\sum_{\{c,d\}\in E_2}(u_c+u_d)^2,\qquad R(G)=|E_1|+|E_2|.
\end{equation}
Let \(\Omega\) be the set of occupied cells, and for distinct cells put \(\delta(c,d)=1\) when \(\{c,d\}\in E_2\) and \(\delta(c,d)=0\) otherwise, the value \(0\) included when one of the two cells is unoccupied. In a representation of \(P_G\) as a sum of \(s\) squares of real bilinear forms, \(f_t=\sum_c v_{c,t}u_c\), write \(v_c\in\mathbb R^s\) for the coefficient vector of the cell \(c\).

\begin{lemma}[Coefficient identities]\label{lem:coeff}
In every such representation,
\begin{equation}\label{eq:coeffnorm}
\|v_c\|^2=\mathbf 1_\Omega(c),
\end{equation}
\begin{equation}\label{eq:coeffline}
\langle v_c,v_d\rangle=\delta(c,d)\qquad\text{if \(c\ne d\) share a row or a column,}
\end{equation}
and for the two diagonals \(\{c,d\}\), \(\{a,b\}\) of a genuine rectangle
\begin{equation}\label{eq:coeffrect}
\langle v_c,v_d\rangle+\langle v_a,v_b\rangle=\delta(c,d)+\delta(a,b).
\end{equation}
\end{lemma}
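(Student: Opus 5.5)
The plan is to compare coefficients of monomials in \(\mathbf x,\mathbf y\) on both sides of \(P_G=\sum_t f_t^2\), with \(f_t=\sum_c v_{c,t}u_c\). Expanding the right-hand side gives
\[
\sum_t f_t^2=\sum_{c,d}\langle v_c,v_d\rangle u_cu_d ,
\]
the sum over ordered pairs of cells. By \eqref{eq:PG}, the left-hand side is \(\sum_{c\in\Omega}u_c^2+\sum_{\{c,d\}\in E_2}2u_cu_d\), using simplicity \((S)\) so that each occupied cell appears in exactly one selected edge. The whole proof consists of listing which unordered pairs of cells produce a given monomial \(x_ix_ky_jy_\ell\), and matching coefficients.

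The first step treats a single cell \(c=(i,j)\). The monomial \(x_i^2y_j^2\) arises only from \(u_c^2\), since \(u_cu_d=x_i^2y_j^2\) forces \(d=c\). Its coefficient is \(\|v_c\|^2\) on the right and \(\mathbf 1_\Omega(c)\) on the left, which gives \eqref{eq:coeffnorm}.

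The second step treats two distinct cells sharing a row, say \(c=(i,j)\) and \(d=(i,\ell)\) with \(j\ne\ell\). The monomial \(x_i^2y_jy_\ell\) arises only from the unordered pair \(\{c,d\}\), so its coefficient on the right is \(2\langle v_c,v_d\rangle\). On the left it is \(2\delta(c,d)\): a one-edge contributes only squares, and a two-edge \(\{c',d'\}\) contributes \(2u_{c'}u_{d'}\), which equals this monomial only if \(\{c',d'\}=\{c,d\}\). The column case is symmetric, and together they give \eqref{eq:coeffline}.

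The third step treats a genuine rectangle with rows \(i\ne k\) and columns \(j\ne\ell\). Here the monomial \(x_ix_ky_jy_\ell\) arises from exactly two unordered pairs, namely the diagonals \(\{(i,j),(k,\ell)\}\) and \(\{(i,\ell),(k,j)\}\). The right side therefore gives \(2(\langle v_c,v_d\rangle+\langle v_a,v_b\rangle)\), and the left side gives \(2(\delta(c,d)+\delta(a,b))\), which is \eqref{eq:coeffrect}.

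The only step needing care is this last enumeration, namely checking that no other product \(u_{c'}u_{d'}\) produces \(x_ix_ky_jy_\ell\). This holds because the row multiset \(\{i,k\}\) and column multiset \(\{j,\ell\}\) determine the pair up to these two matchings. I also need the convention that \(\delta\) vanishes on unoccupied cells, so that such cells are correctly assigned zero on the left.
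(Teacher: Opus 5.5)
Your proposal is correct and takes the same approach as the paper: compare coefficients for the three kinds of monomials (a squared cell, a product of two cells on a common line, and a rectangle product with exactly its two diagonal representations), then cancel the factor \(2\). Your explicit appeal to simplicity \((S)\) in deriving \eqref{eq:coeffnorm} states an assumption that the paper's one-paragraph proof leaves implicit.
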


\begin{proof}
Compare coefficients with \eqref{eq:PG}. A squared cell monomial has coefficient \(\|v_c\|^2\); a product of two distinct cells of a common line has one unordered cell-pair representation; a product of two cells in distinct rows and in distinct columns has exactly the two diagonal representations. Cancelling the factor \(2\) in the last two comparisons gives \eqref{eq:coeffline} and \eqref{eq:coeffrect}. An unoccupied cell contributes no monomial, so \(v_c=0\) there.
\end{proof}

\begin{proposition}[Soundness of a finite certificate]\label{prop:soundness}
Suppose that a finite sequence of the rules of \S\ref{subsec:rw3} identifies both halves of every selected two-edge, keeps distinct selected edges in distinct classes, and certifies every pair of distinct selected-edge classes as orthogonal. Then
\begin{equation}\label{eq:soundness}
\mathrm{SOS}(P_G)=R(G).
\end{equation}
\end{proposition}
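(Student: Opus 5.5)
The plan is to prove the two inequalities separately. The easy one is \(\mathrm{SOS}(P_G)\le R(G)\): the displayed sum \eqref{eq:PG} is itself a representation of \(P_G\) with exactly \(|E_1|+|E_2|=R(G)\) squares. For the reverse inequality, fix an arbitrary representation \(P_G=\sum_{t=1}^s f_t^2\) with coefficient vectors \(v_c\in\mathbb R^s\), and produce \(R(G)\) pairwise orthogonal unit vectors in \(\mathbb R^s\). That forces \(s\ge R(G)\), and since \(s\) was arbitrary, \(\mathrm{SOS}(P_G)\ge R(G)\).

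The core step is an induction along the finite rule sequence. I would maintain the invariant that every relation the certificate has produced so far holds for \emph{every} representation. These relations are of three kinds: an identification \(c\sim d\), meaning \(v_c=v_d\); an orthogonality \(\langle v_c,v_d\rangle=0\); and a zero \(v_c=0\). The base facts are exactly Lemma~\ref{lem:coeff}:
\begin{itemize}
\item \eqref{eq:coeffnorm} gives unit norms on \(\Omega\) and \(v_c=0\) off \(\Omega\);
\item \eqref{eq:coeffline} gives inner product \(1\) on a two-edge sharing a line, and \(0\) for other collinear pairs;
\item \eqref{eq:coeffrect} gives the rectangle sums.
\end{itemize}

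Each rule of \S\ref{subsec:rw3} must then be checked to preserve the invariant, case by case:
\begin{itemize}
\item \emph{Identification rule.} If \(\|v_c\|=\|v_d\|=1\) and \(\langle v_c,v_d\rangle=1\), then \(\|v_c-v_d\|^2=0\), so \(v_c=v_d\).
\item \emph{Rectangle transfer.} If one diagonal's inner product is already known (for instance \(0\), or \(1\) via identified classes), then \eqref{eq:coeffrect} determines the other.
\item \emph{Transport.} Identification lets known inner products pass through classes by substitution.
\item \emph{Zero-companion rule.} A partner cell with \(v=0\) contributes inner product \(0\) in \eqref{eq:coeffrect}.
\end{itemize}
Every step is linear-algebraic, so it holds in every representation.

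At termination, define for each selected edge \(e\) one vector \(w_e\). For \(e=\{c\}\in E_1\), set \(w_e=v_c\). For \(e=\{c,d\}\in E_2\), the hypothesis that both halves are identified gives \(v_c=v_d\), and we set \(w_e\) to this common vector. Each \(w_e\) is a unit vector by \eqref{eq:coeffnorm}. Distinct edges lie in distinct classes, and every pair of distinct classes is certified orthogonal, so \(\langle w_e,w_{e'}\rangle=0\) for \(e\ne e'\). Hence \(\{w_e\}\) is an orthonormal family of size \(R(G)\) in \(\mathbb R^s\), giving \(s\ge R(G)\).

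I expect the main obstacle to be the faithful verification that every rule, and the zero-companion rule in particular, is sound. Concretely, each rule's conclusion must follow from the coefficient identities alone and not from the specific displayed representation. This requires checking that identities used in rectangles refer only to genuine rectangles, where \eqref{eq:coeffrect} holds with the \(\delta\) values of \(G\) itself. A secondary point is that orthogonality certified between classes must be transported correctly under identifications; because the classes are equalities of vectors, this substitution is immediate.
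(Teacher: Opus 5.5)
Your architecture matches the paper's. The displayed representation gives $\mathrm{SOS}(P_G)\le R(G)$. Induction over the finite rule sequence shows that every certified identification and orthogonality holds in every representation. At termination the selected edges give $R(G)$ orthonormal vectors in $\mathbb R^s$.

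The gap is in your case-by-case check. The rules of \S\ref{subsec:rw3} are five, and you never treat the complementary-pair rule (rule 4). That rule is exactly where your closing claim that ``every step is linear-algebraic'' fails. In it, neither diagonal of the rectangle has a previously certified value, so your rectangle-transfer item does not apply. The identity \eqref{eq:coeffrect} gives only $\langle v_p,v_q\rangle+\langle v_r,v_s\rangle=\delta(p,q)+\delta(r,s)=2$. That is a single linear equation in two unknown inner products, and no linear deduction separates the two terms.

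The missing ingredient is the Cauchy--Schwarz inequality combined with the unit norms of \eqref{eq:coeffnorm}. All four cells are halves of selected two-edges, hence occupied. So $\langle v_p,v_q\rangle\le\|v_p\|\|v_q\|=1$ and likewise $\langle v_r,v_s\rangle\le 1$, which forces both to equal $1$. Only then does your identification step $\|v-w\|^2=2-2\langle v,w\rangle=0$ give $v_p=v_q$ and $v_r=v_s$.

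This omission matters: every certificate in the paper relies heavily on this rule. For example, Table~\ref{tab:certs} credits it with $1680$ of the $1995$ two-edges at $n=21$. Without it, your induction does not cover the rule sequences the proposition is meant to apply to. Once you add this case, your proof coincides with the paper's.
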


\begin{proof}
By \eqref{eq:coeffnorm} the vector of an occupied cell is a unit vector, so inner product \(1\) forces the two vectors to coincide and inner product \(0\) makes them orthogonal: the line rule and the saturation rule are sound. By \eqref{eq:coeffrect} either diagonal of a genuine rectangle determines the other at its own prescribed value, which is the prescribed-value rectangle-transfer rule; when both diagonals are selected two-edges their inner products sum to \(2\) while each is at most \(1\) by Cauchy--Schwarz, so both equal \(1\), and the complementary-pair rule is sound as well. Induction over the finite sequence establishes every identification and every orthogonality that it claims, in every representation of \(P_G\) by \(s\) squares. At termination the \(R(G)\) selected edges furnish \(R(G)\) mutually orthogonal unit vectors in \(\mathbb R^s\), so \(s\ge R(G)\), while \eqref{eq:PG} gives \(s\le R(G)\) for the displayed representation. Hence \eqref{eq:soundness}.
\end{proof}

Holes require no separate theorem. If \(c\notin\Omega\), then \(v_c=0\), and \eqref{eq:coeffrect} for a rectangle of which \(c\) is a corner reduces to
\begin{equation}\label{eq:hole}
\langle v_a,v_b\rangle=\delta(a,b)
\end{equation}
for the diagonal \(\{a,b\}\) opposite to the diagonal through \(c\): that opposite diagonal is certified \emph{at its own prescribed value}, its two halves being identified when it is a selected two-edge and declared orthogonal only when it is not. This corrected conclusion is the fifth rule below; the stronger conclusion \(\langle v_a,v_b\rangle=0\) would be unsound for a rectangle whose opposite diagonal is a selected two-edge. The same soundness proof covers zero, one or several holes: the zero-companion rule is a consequence of \eqref{eq:coeffrect} and not an additional inference principle.

What this replaces. The identities \eqref{eq:coeffnorm}--\eqref{eq:coeffrect} and their soundness argument are stated once, with the recursive-line framework of \cite[Definitions 4.1, 4.5, 4.11]{reproducibility} recalled next. Each finite case then needs only a verified input and the terminal conditions: an accepting finite sequence is sufficient, exhaustive least-fixed-point computation being one way of finding it rather than a further requirement. Printed equal labels specify the intended two-edges; they are never assumed identified before the certificate proves it.

\subsection{The recursive-line criterion \((\mathrm{RW}3^+)\)}\label{subsec:rw3}

On the set \(\Omega\) of occupied cells one constructs the least equivalence relation \(\sim\) and the least symmetric orthogonality relation \(\perp_R\) closed under the following five rules, the first four of which are those of \cite[Definitions 4.1 and 4.5]{reproducibility}.

\begin{enumerate}
\item \emph{Line rule.} If occupied cells \(p,q\) lie in a common row or column, then their prescribed value \(\delta(p,q)\in\{0,1\}\) is certified: add \(p\sim q\) if \(\delta(p,q)=1\) and \(p\perp_R q\) if \(\delta(p,q)=0\). This is \eqref{eq:coeffline}.
\item \emph{Saturation rule.} If \(p\sim p'\), \(q\sim q'\) and \(p'\perp_R q'\), then add \(p\perp_R q\): identified vectors are one and the same vector.
\item \emph{Rectangle transfer rule.} Let \(\{p,q\}\) and \(\{r,s\}\) be the two diagonals of a genuine rectangle, that is of a pair of distinct rows together with a pair of distinct columns. If the companion diagonal \(\{r,s\}\) is certified to have its prescribed value \(\delta(r,s)\), add the same conclusion for the target diagonal \(\{p,q\}\), identification when \(\delta(p,q)=1\) and orthogonality when \(\delta(p,q)=0\). This is \eqref{eq:coeffrect}.
\item \emph{Complementary-pair rule.} If both diagonals \(\{p,q\}\) and \(\{r,s\}\) of a genuine rectangle are selected two-edges, then add \(p\sim q\) and \(r\sim s\) simultaneously; by \eqref{eq:coeffrect} both inner products equal \(1\).
\item \emph{Zero-companion rule for a rectangle with an unoccupied corner.} Let \(\{p,q\}\) and \(\{r,s\}\) be the two diagonals of a genuine rectangle, and suppose that one of the four corners of the rectangle is unoccupied; name it \(p\), so that \(q\) is the corner opposite to it and \(\{r,s\}\) the diagonal opposite to \(\{p,q\}\). Since \(v_p=0\) and \(\delta(p,q)=0\), identity \eqref{eq:coeffrect} reduces to \eqref{eq:hole}: the opposite diagonal \(\{r,s\}\) is certified at its own prescribed value, its two halves being identified when it is a selected two-edge and declared orthogonal only when it is not. An unoccupied corner therefore acts as a grounded zero for the rectangles whose opposite diagonal is unselected, and a rule declaring \(\{r,s\}\) orthogonal unconditionally would be unsound. This is how the single hole of the odd-\(p\) case is used: in the replay of the \(21\times7\) configuration of Appendix~\ref{app:217} the rule is applied to the \(120\) rectangles having the hole as a corner, none of whose opposite diagonals is a selected two-edge, and it is what grounds the \(16\) pairs of selected-edge classes that the other four rules leave uncertified.
\end{enumerate}

A configuration \(G\) satisfies \((\mathrm{RW}3^+)\) if the closure satisfies: every selected two-edge has its two halves identified; distinct selected edges lie in distinct equivalence classes; and every pair of distinct selected edges has orthogonal representatives. If \(G\) satisfies \((\mathrm{RW}3^+)\), then \(P_G\) is irreducible, so \(\mathrm{SOS}(P_G)=R(G)\) \cite[Lemma 2.10]{xu26} or \cite[Proposition 4.6]{reproducibility}; this is the finite form of Proposition~\ref{prop:soundness}.

The recursive-line Zarankiewicz number \(z_{RL}(m,n)\) is the maximum of \(R(G)\) over simple limited configurations whose one-edge graph is \(C_4\)-free and which satisfy \((\mathrm{RW}3^+)\).

\subsection{Certificates as grounded components}\label{subsec:grounded}

Since the audit of Tables~\ref{tab:shape} and~\ref{tab:certs} is to be a shared computation rather than ten separate arguments, we record the normal form in which an accepting replay is stored. First identify, by a legal finite sequence of the preceding rules, the two halves of every selected two-edge, never merging two different selected edges; orthogonality facts may be used along the way. Write \(w_e\) for the common vector of the selected edge \(e\) and introduce, for two distinct selected edges, the unknown
\begin{equation}\label{eq:X}
X_{\{e,f\}}=\langle w_e,w_f\rangle,\qquad \mathcal V=\binom{E_1\cup E_2}{2},
\end{equation}
so that \(|\mathcal V|=\binom{R(G)}2\). For distinct cells put \(D_{cd}=\langle v_c,v_d\rangle-\delta(c,d)\); identity \eqref{eq:coeffrect} reads \(D_{cd}+D_{ab}=0\). After the identifications \(D_{cd}=0\) whenever \(\{c,d\}\) is a selected two-edge or a diagonal through a hole, and \(D_{cd}\) is otherwise one of the unknowns \eqref{eq:X}, so that the residual identities are of the two forms
\begin{equation}\label{eq:residual}
X_A=0\qquad\text{or}\qquad X_A+X_B=0 .
\end{equation}
Make a graph \(\Gamma\) on \(\mathcal V\) whose edges are the transfer relations of the second kind. A vertex is \emph{grounded} if its zero is supplied by a line, by a rectangle one of whose diagonals is a selected two-edge or passes through a hole, or by the identification prefix; each ground and each transfer is to carry its justification, and no root may depend circularly on the propagation it initiates.

\begin{proposition}[Grounded-component criterion]\label{prop:grounded}
If every connected component of \(\Gamma\) contains a grounded vertex, then all distinct selected-edge vectors are orthogonal. Together with the identification prefix this supplies the terminal conditions of \((\mathrm{RW}3^+)\) and hence, by Proposition~\ref{prop:soundness}, the value \(R(G)=\mathrm{SOS}(P_G)\).
\end{proposition}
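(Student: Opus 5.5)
The argument is a propagation along the connected components of \(\Gamma\), followed by an appeal to Proposition~\ref{prop:soundness}. Fix any representation of \(P_G\) by \(s\) squares. By the identification prefix, which is a legal finite sequence of the rules of \S\ref{subsec:rw3} and hence sound by the argument of Proposition~\ref{prop:soundness}, the two halves of every selected two-edge carry a common unit vector \(w_e\). Distinct selected edges remain in distinct classes, so the unknowns \(X_{\{e,f\}}\) of \eqref{eq:X} are well defined.

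The first step is to check that every residual identity in \eqref{eq:residual} is a genuine consequence of Lemma~\ref{lem:coeff} in this representation.

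\begin{itemize}
\item A line relation gives \(D_{cd}=0\) directly by \eqref{eq:coeffline}.
\item A rectangle one of whose diagonals is a selected two-edge has that diagonal's \(D\) equal to \(0\) after identification. A rectangle with a diagonal through a hole has that diagonal's \(D\) equal to \(0\) by \eqref{eq:hole}. In either case \eqref{eq:coeffrect}, read as \(D_{cd}+D_{ab}=0\), forces the other diagonal's \(D\) to vanish.
\item For any other genuine rectangle both diagonals join cells of distinct selected edges. Since \(\delta=0\) there, each \(D\) equals the corresponding \(X\), and \eqref{eq:coeffrect} gives \(X_A+X_B=0\).
\end{itemize}

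One point needs care: a diagonal whose two cells belong to the same selected edge but which is not itself the two-edge. That happens only when two cells of one edge share a line or lie diagonally. Two cells of one edge are a single pair, so such a diagonal is exactly the two-edge. Hence every \(D\) is either \(0\) or some \(X\).

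The second step propagates zeros along \(\Gamma\). Every edge of \(\Gamma\) encodes \(X_A=-X_B\), so \(|X|\) is constant on each connected component. In fact \(X_B=\pm X_A\) along any path, with the sign fixed by the parity of the path length. If a component contains a grounded vertex \(A_0\) with \(X_{A_0}=0\), every vertex in it satisfies \(X=0\). The sign ambiguity on odd cycles, which is the obstruction discussed for the odd branch, is harmless here precisely because the value being propagated is zero.

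To make the argument non-circular, I will order the propagation as a breadth-first search from the grounds. Each ground's justification uses only the identification prefix and Lemma~\ref{lem:coeff}, never the values being propagated, so induction on BFS distance is legitimate. This is the step I expect to need the most care. The requirement that no root depend circularly on the propagation it initiates must be turned into a well-founded order: prefix first, then grounds, then BFS layers. With that order in place, each certified fact can be shown to be an application of rules 1--5, namely the line rule, saturation, transfer at the prescribed value, and the zero-companion rule.

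Having \(X_{\{e,f\}}=0\) for all pairs in \(\mathcal V\) means every pair of distinct selected-edge classes has orthogonal representatives. Together with the identification prefix, the three terminal conditions of \((\mathrm{RW}3^+)\) then hold. Proposition~\ref{prop:soundness} yields \(R(G)\) mutually orthogonal unit vectors in \(\mathbb R^s\), so \(s\ge R(G)\), while the displayed representation gives \(s\le R(G)\). Hence \(\mathrm{SOS}(P_G)=R(G)\).
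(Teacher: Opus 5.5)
Your proposal is correct and is essentially the paper's argument: a zero propagates from the grounded vertex of each component along the relations \(X_A=-X_B\), each step being a prescribed-value rectangle transfer with saturation handling the choice of representatives, and the prefix keeps distinct selected edges apart; your check that the residual identities take the two forms \eqref{eq:residual} is the setup the paper records just before the statement. One correction of attribution: the sign issue you mention concerns odd cycles of \(\Gamma\) itself (in an ungrounded component, using \(X=-X\) would be the signed inference, as with the nested \(q=3\) witness), not the odd closed walk \(C_{\lambda\mu}\) of the odd branch, and as you say it plays no role once a component is grounded.
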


\begin{proof}
Along an edge \(X_A=-X_B\), so a zero propagates from a grounded vertex to every vertex of its component; each step is an ordinary prescribed-value rectangle transfer, saturation accounting for the chosen representatives. Distinct selected edges are kept apart by the prefix.
\end{proof}

A loop or an odd cycle of \(\Gamma\) would imply \(X=-X\) algebraically; such a component is not accepted by this ordinary test, and using it would amount to the stronger signed inference. Failure of the test does not, by itself, establish reducibility. For \(R=R(G)\), \(|\mathcal V|=\binom R2\) and \(c\) components an accepting replay is stored as the identification prefix, one grounded root and a spanning tree for each component, that is \(\binom R2-c\) transfer records and \(c\) root justifications besides the prefix; a verifier then checks those records and the coverage of all unknowns, rather than a full least-fixed-point log. This is a normal form for certificates, not a claim of subquadratic certificate size.

\subsection{Universal cell bound}

\begin{proposition}[\cite{reproducibility}, Proposition 3.2]\label{prop:cell}
For all \(m,n\ge2\),
\[
z_2(m,n)\le \left\lfloor\frac{mn+z(m,n)}2\right\rfloor.
\]
\end{proposition}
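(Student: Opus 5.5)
The plan is to bound an irreducible configuration by counting cells. The only inputs are the coefficient identities of Lemma~\ref{lem:coeff} and the fact that the one-edge graph is \(C_4\)-free.

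\emph{Reduction.} Fix a configuration \(G=([m],[n],E_1\cup E_2)\) that attains \(z_2(m,n)\). It is simple, so the supports of its selected edges are disjoint. The one-edges occupy \(|E_1|\) cells, and each two-edge occupies two further cells. Since all these cells are distinct cells of the \(m\times n\) grid,
\[
|E_1|+2|E_2|\le mn .
\]

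\emph{One-edge bound.} In the definition of \(z_2\) the one-edge graph \((E_1)\) is required to be \(C_4\)-free (the configuration is limited, or at least \(E_1\) is \(C_4\)-free). Hence \(|E_1|\le z(m,n)\). If the definition of \(z_2\) does not impose this directly, I would recover it from irreducibility. Suppose \(E_1\) contained all four corners of a rectangle. The \(2\times2\) block \(u_a^2+u_b^2+u_c^2+u_d^2\) with \(ad=bc\) as monomials could then be rewritten as \((u_a+u_d)^2\) plus other squares, using the classical identity \((u_a+u_d)^2+(u_b-u_c)^2=u_a^2+u_b^2+u_c^2+u_d^2\). The coefficient of the cross term vanishes because \(u_au_d=u_bu_c\). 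This merges two one-edges and lowers the rank, contradicting \(\mathrm{SOS}(P_G)=R(G)\). I expect this to be the only delicate point: making sure the rewriting genuinely reduces the number of squares. Here the four squares collapse to two, which is possible precisely because \(u_au_d=u_bu_c\).

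\emph{Combination.} Adding the two inequalities gives
\[
2R(G)=2|E_1|+2|E_2|\le mn+|E_1|\le mn+z(m,n).
\]
Since \(R(G)\) is an integer, \(R(G)\le\lfloor (mn+z(m,n))/2\rfloor\). Taking the maximum over admissible \(G\) gives the proposition.
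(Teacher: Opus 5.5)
Your proof is correct and is essentially the paper's argument: irreducibility excludes four one-edges on a rectangle, because \((a+d)^2+(b-c)^2=a^2+b^2+c^2+d^2\) when \(ad=bc\), which gives \(|E_1|\le z(m,n)\); simplicity gives \(|E_1|+2|E_2|\le mn\), and adding the two inequalities and taking floors yields the bound. One small point: the coefficient identities of Lemma~\ref{lem:coeff}, which you list as an input at the start, are never actually used, and the argument needs only the rectangle identity and the cell count.
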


\begin{proof}
If four one-edges occupied a rectangle with diagonal monomials \(a,b,c,d\), then \(ad=bc\), so
\[
a^2+b^2+c^2+d^2=(a+d)^2+(b-c)^2,
\]
and the displayed decomposition is reducible. Hence \(|E_1|\le z(m,n)\). Simplicity gives \(|E_1|+2|E_2|\le mn\). Therefore
\[
2(|E_1|+|E_2|)=|E_1|+(|E_1|+2|E_2|)\le z(m,n)+mn.
\]
Taking floors gives the bound.
\end{proof}

\subsection{Classical value in the complete-graph incidence family}

Let \(m=\binom n2\), so the rows are the edges of \(K_n\) and the columns are its vertices. The incidence graph has
\[
|E_1|=n(n-1)=2m.
\]
Since \(m=\binom n2\), \v{C}ulik's formula \cite{culik56} gives
\[
z(m,n)=m+\binom n2=2m=n(n-1).
\]
For completeness, here is the direct counting argument. If a \(C_4\)-free \(m\times n\) matrix has row degrees \(r_1,\dots,r_m\) and \(e=\sum_ir_i\) ones, then \(\sum_{i}\binom{r_i}2\le\binom n2\), because every pair of columns meets in at most one row, while the number of pairs of columns is exactly \(\binom n2=m\). Moreover \(r\le\binom r2+1\) for every \(r\ge1\), with equality only for \(r\in\{1,2\}\). Hence
\[
e=\sum_{i}r_i\le\sum_{i:\,r_i\ge1}\Bigl(\binom{r_i}2+1\Bigr)\le\binom n2+m=2m,
\]
and the incidence graph of \(K_n\), in which every row has degree \(2\), attains \(2m\). So the classical value is \(z(m,n)=2m=n(n-1)\).
Substituting into Proposition~\ref{prop:cell},
\begin{equation}\label{eq:cell}
z_2\left(\binom n2,n\right)\le\left\lfloor\frac{mn+z(m,n)}2\right\rfloor
=\left\lfloor\frac{\frac{n(n-1)}2\cdot n+n(n-1)}2\right\rfloor
=\left\lfloor\frac{n(n-1)(n+2)}4\right\rfloor.
\end{equation}
This is the upper bound in \eqref{eq:main}.

\section{The even case \(n=2q\)}\label{sec:even}

Let \(n=2q\) and \(m=\binom{2q}{2}=q(2q-1)\). The even orders are settled by the nested perfect one-factorization construction of the complete-graph incidence family in \cite[\S 6.3 and Appendices A.1--A.4]{reproducibility}, which we recall as an inherited result with its exact hypotheses; the generator is described as well, since the search inputs of the appendices are taken from it.

\subsection{The inherited result}

\begin{proposition}[Inherited even-prime case]\label{prop:even}
Let \(q\ge5\) be an odd prime. Then
\begin{equation}\label{eq:even}
\begin{split}
z_2\left(\binom{2q}{2},2q\right)&=z_{SL}\left(\binom{2q}{2},2q\right)=z_{RL}\left(\binom{2q}{2},2q\right)\\
&=q(2q-1)(q+1)=\frac{n(n-1)(n+2)}4=Z(n).
\end{split}
\end{equation}
\end{proposition}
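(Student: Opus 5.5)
The plan is to sandwich the whole chain between the cell bound and an explicit certified witness. The upper bound comes from the hierarchy \eqref{eq:hierarchy} together with Proposition~\ref{prop:cell}. With \(n=2q\) we have \(m=q(2q-1)\) and \(z(m,n)=n(n-1)\), so \eqref{eq:cell} gives
\[
z_2\le \frac{n(n-1)(n+2)}4=\frac{2q(2q-1)(2q+2)}4=q(2q-1)(q+1).
\]
No floor is needed, because \(n\not\equiv3\pmod4\). Since \(z_2\ge z_{SL}\ge z_{RL}\), it then suffices to exhibit one simple, limited configuration \(G\) with \(C_4\)-free one-edge graph and \(R(G)=q(2q-1)(q+1)\) that satisfies \((\mathrm{RW}3^+)\). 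Then \(z_{RL}\ge R(G)\), and the whole chain collapses to equality.

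For the lower bound I would use the nested construction of \cite[\S6.3]{reproducibility}. Since \(2q-1\) need not be prime, I would take for \(K_{2q}\) with \(q\) an odd prime the standard perfect one-factorization \(GK_{2q}\): the patterned factorization on \(\mathbb Z_{2q-1}\cup\{\infty\}\), which is perfect exactly when \(q\) is prime. This gives factors \(F_0,\dots,F_{2q-2}\), and each union \(F_a\cup F_b\) is a Hamilton cycle.

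The one-edge graph is the incidence graph, with \(n(n-1)=2m\) cells; it is \(C_4\)-free and attains \(z(m,n)\), so \(G\) is limited. The non-incidence cells number
\[
mn-2m=m(n-2)=q(2q-1)(2q-2),
\]
which is even. They are paired into two-edges by the nested rule: a row \(e\in F_a\) is matched with its successor along the Hamilton cycles \(F_a\cup F_b\). I would count that this rule covers every non-incidence cell exactly once, so there are no holes. That gives
\[
|E_2|=q(2q-1)(q-1),\qquad R(G)=2m+|E_2|=q(2q-1)(q+1).
\]

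The certificate step I would run through Proposition~\ref{prop:grounded}.

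\begin{itemize}
\item \textbf{Identification prefix.} Apply the line rule along rows and columns, then the complementary-pair rule, to rectangles whose two diagonals are both two-edges sharing the same pair of factors. This identifies the halves of every two-edge without merging distinct edges, since distinct edges sit in distinct row/column supports.
\item \textbf{Reduction to residual identities.} Every unknown \(X_{\{e,f\}}\) with \(e,f\) meeting a common line is grounded by the line rule. The remaining unknowns are linked by the cross-factor transfer equations \cite[\S A.3--A.4]{reproducibility}, of the form \(X_A+X_B=0\).
\item \textbf{Connectivity and grounding.} Walking around the Hamilton cycle \(F_a\cup F_b\) alternates between the two factors. Because the cycle has even length \(2q\), each residual component is a path or an even cycle, and the walk eventually reaches a pair sharing a column, which is grounded.
\end{itemize}

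The main obstacle is proving that no component of \(\Gamma\) is an odd cycle without a ground. This is exactly where \(q=3\) fails, and there only the signed criterion works. I would first try to bound the length of a transfer orbit by the cycle length \(2q\), and then show that for \(q\ge5\) every orbit passes a column-sharing pair before closing up. The key input there is that the short cycles present at \(q=3\) do not arise: for \(q\ge5\) each orbit visits at least three distinct factor indices.

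If that general argument stalls, I would fall back on citing \cite[Appendices A.1--A.4]{reproducibility}, where these transfers are carried out for all odd primes \(q\ge5\). Then \(z_{RL}\ge q(2q-1)(q+1)\), which closes the chain.
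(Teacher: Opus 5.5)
Your outer sandwich is the paper's. The cell bound \eqref{eq:cell} is an integer for even \(n\), the hierarchy \eqref{eq:hierarchy} gives \(z_2\ge z_{SL}\ge z_{RL}\), and the lower bound is the configuration \(G_q\) of \cite[\S6.3, App.~A.3--A.4]{reproducibility} together with its \((\mathrm{RW}3^+)\) certificate. The paper's proof is exactly that citation, i.e.\ your fallback. The trouble is that the witness you describe is not \(G_q\), and two of its ingredients fail.

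\emph{The factorization.} The patterned factorization on \(\mathbb Z_{2q-1}\cup\{\infty\}\) is perfect exactly when \(2q-1\) is prime, not when \(q\) is. By the argument of Lemma~\ref{lem:path} with \(n=2q-1\), \(F_a\cup F_b\) is a Hamilton cycle if and only if \(\gcd(a-b,2q-1)=1\). Already at \(q=5\) the union \(F_0\cup F_3\) splits into the \(4\)-cycle \(\infty,0,6,3\) and a \(6\)-cycle. So your Hamilton-cycle claim fails for \(q=5,11,13,17,\dots\). The paper uses the classical perfect one-factorization of \(K_{2q}\) for \(q\) an odd prime (\S\ref{subsec:even-gen}), and mentions the patterned one only as an extra source when \(2q-1\) happens to be prime.

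\emph{The pairing.} The nested selection is purely intra-factor. For each factor \(F_\lambda\) and each anchor edge \(g_{\lambda,a}\) with endpoints \(a_0,a_1\), the rows \(g_{\lambda,a+x}\) and \(g_{\lambda,a-x}\) are paired by \(t\mapsto2a-t\) on \(\mathbb Z_q\), and both complementary diagonals on the columns \(a_0,a_1\) are selected. Every two-edge thus has its complementary partner inside a single factor. This covers each non-incidence cell \((e,v)\) exactly once: take the factor of \(e\) and its edge through \(v\) as anchor. The coverage holds independently of perfectness, and there is no cross-factor selection stage. Your rule, matching a row of \(F_a\) with its ``successor'' on \(F_a\cup F_b\), pairs rows of different factors and is not well defined as \(b\) ranges over \(2q-2\) values. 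It also comes with no coverage count, so \(|E_2|=q(2q-1)(q-1)\) and \(H=0\) are asserted rather than proved for it.

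\emph{The certificate.} The sketch also does not work. The vertices of \(\Gamma\) are the unknowns \(X^{(\lambda\mu)}_{ij}(t,u)\), and the transfers act on \((i,j,t,u)\) by index shifts and affine maps of \(\mathbb Z_q\). The even length of the Hamilton cycle does not control the cycle structure of \(\Gamma\). At \(q=3\) the cycle \(F_\lambda\cup F_\mu\) also has even length \(2q=6\), yet the ordinary closure leaves ungrounded odd components, so your parity step would ``prove'' a false statement there.

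Your proposed mechanism is also misplaced. The transfers never leave the fixed pair \(\{\lambda,\mu\}\), so counting the factor indices an orbit visits is meaningless. The actual input, and what excludes \(q=3\), is that \(3\) as well as \(2\) is invertible modulo \(q\), i.e.\ \(3(\alpha_i-\alpha_{i-1})\neq0\). Note too that \((\mathrm{RW}3^+)\) requires every component of \(\Gamma\) to contain a ground. Excluding ungrounded odd cycles is not enough, since an ungrounded even cycle or path is equally unresolved.

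As written, your proof stands only through the fallback citation. That citation must be to the configuration \(G_q\) of \cite{reproducibility}, with its perfect factorization for \(q\) prime and its anchor pairing, not to the configuration you constructed.
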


\begin{proof}
The upper bound is the cell bound \eqref{eq:cell}, an integer because \(n=2q\) is even. For the lower bound, \cite[\S 6.3 and Appendices A.3--A.4]{reproducibility} exhibits the configuration \(G_q\) of \S\ref{subsec:even-gen}, proves that its closure satisfies the terminal conditions of \((\mathrm{RW}3^+)\), and hence obtains \(z_2\ge z_{SL}\ge z_{RL}\ge R(G_q)=q(2q-1)(q+1)\); Proposition~\ref{prop:soundness} is the irreducibility statement used there, proved by the same coefficient comparison. The certificate uses the invertibility of \(3\) modulo \(q\) in addition to that of \(2\), which is where the hypothesis \(q\ge5\) enters: for \(q\) an odd prime \(q\ge5\) the union \(F_\lambda\cup F_\mu\) of two distinct factors is a genuine Hamilton cycle of \(K_{2q}\), and the cross-factor transfer equations of \cite[\S A.3--A.4]{reproducibility} apply verbatim. With \eqref{eq:hierarchy} the two bounds give \eqref{eq:even}.
\end{proof}

\subsection{The generator}\label{subsec:even-gen}

Let \(q\) be odd, so that \(K_{2q}\) admits a perfect one-factorization \(\mathcal F=\{F_\lambda:\lambda=1,\dots,2q-1\}\) into \(2q-1\) perfect matchings of \(q\) edges each. This is classical for \(q\) an odd prime \cite{kobayashi1989}, and we restrict to that case; the factorization also exists whenever \(2q-1\) is an odd prime, extending each of the \(2q-1\) cyclic near-perfect matchings on \(2q-1\) vertices by the edge \(\{a,\infty\}\) at its missing vertex \(a\), which covers \(K_8\), \(K_{12}\), \(K_{18}\) and \(K_{20}\).

Fix a factor \(F_\lambda\) and reserve \(a\) for inner labels. Its \(q\) edges are labelled by the elements of \(\mathbb Z_q\), the edge of \(F_\lambda\) with inner label \(a\) being written \(g_{\lambda,a}\): the outer index \(\lambda\) of a factor and the inner label \(a\) of one of its edges range over different sets and are independent, and only the additive structure of the ring \(\mathbb Z_q\) is used. The anchor of \(F_\lambda\) is one of its edges, carrying the anchor label \(a\) of that factor as prescribed by the nested construction of \cite[\S 6.3]{reproducibility}; \(a_0,a_1\) are its two endpoints. Since \(q\) is odd, \(2\) is invertible modulo \(q\), and for each \(x=1,\dots,(q-1)/2\) we pair the rows \(g_{\lambda,a+x}\) and \(g_{\lambda,a-x}\); on these two rows with columns \(a_0,a_1\) we select the two complementary diagonals as two-edges. Running over all \(2q-1\) factors, all \(q\) anchors of each factor and the \((q-1)/2\) pairs of rows determined by the involution \(t\mapsto2a-t\) of each anchor uses every non-incidence cell exactly once: the anchor stage is the whole selection, exactly as in \cite[\S 6.3 and Appendix A.1]{reproducibility}, and no second, cross-factor selection stage is introduced. The cross-factor structure \(F_\lambda\cup F_\mu\) is used only for the orthogonality statement.

\begin{lemma}\label{lem:counts-even}
The configuration \(G_q\) of the generator satisfies
\[
|E_1|=2q(2q-1),\qquad |E_2|=q(2q-1)(q-1),\qquad H=0,\qquad R(G_q)=q(2q-1)(q+1).
\]
\end{lemma}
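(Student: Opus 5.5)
The plan is to count cells directly from the generator, separating the incidence cells (one-edges) from the non-incidence cells (covered by two-edges). Then \(H=0\) and the value of \(R(G_q)\) follow by arithmetic, once we know that the anchor stage covers every non-incidence cell exactly once.

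\emph{One-edges.} The rows are the \(m=q(2q-1)\) edges of \(K_{2q}\). The one-edge graph is the incidence graph, so each row contributes its two endpoint cells. This gives
\[
|E_1|=2m=2q(2q-1),
\]
which is the value of \(z(m,n)\) computed in the previous section.

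\emph{Two-edges.} I would count the selections directly. For a factor \(F_\lambda\) and an anchor \(g_{\lambda,a}\) with endpoints \(a_0,a_1\), each \(x\in\{1,\dots,(q-1)/2\}\) gives the rows \(g_{\lambda,a\pm x}\). The \(2\times2\) block on columns \(a_0,a_1\) consists of non-incidence cells: the rows are edges of the same perfect matching \(F_\lambda\), so they are disjoint from the anchor edge. The block contributes exactly two two-edges, its complementary diagonals. The total is
\[
(2q-1)\cdot q\cdot\tfrac{q-1}{2}\cdot 2=q(2q-1)(q-1).
\]
To confirm that this is exactly \(|E_2|\), with no repeated cell, I would check coverage as follows. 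A non-incidence cell \((e,v)\) determines a unique factor \(F_\lambda\) containing \(e\), and a unique edge \(g_{\lambda,a}\) of \(F_\lambda\) through \(v\), since \(F_\lambda\) is perfect and \(v\notin e\). Write \(e=g_{\lambda,t}\) with \(t\neq a\). Because \(2\) is invertible modulo \(q\), the involution \(t\mapsto 2a-t\) has \(a\) as its only fixed point, so \(t=a\pm x\) for a unique \(x\in\{1,\dots,(q-1)/2\}\) and a unique sign. Hence \((e,v)\) lies in exactly one block. So every non-incidence cell is covered exactly once, which means \(H=0\). As a cross-check, \(2|E_2|\) must equal the number of non-incidence cells:
\[
2|E_2|=m(2q-2)=2q(2q-1)(q-1).
\]

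\emph{Total.} Adding the two counts gives
\[
R(G_q)=2q(2q-1)+q(2q-1)(q-1)=q(2q-1)(q+1),
\]
which matches the cell bound \eqref{eq:cell} with equality.

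The main obstacle is the exact-once coverage claim. It needs the anchor convention made precise: every edge of each factor serves in turn as an anchor, "all \(q\) anchors of each factor", with \(a\) ranging over \(\mathbb Z_q\). With that reading, uniqueness of \(\lambda\), \(a\) and \(x\) is immediate from the argument above. Without it, I would fall back on the pure counting identity \(|E_1|+2|E_2|=mn\) together with simplicity \((S)\) to force \(H=0\).
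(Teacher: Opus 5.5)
Your proof is correct and is essentially the paper's argument: count the \(2q(2q-1)\) incidence cells, observe that the \(2q(2q-1)(q-1)\) non-incidence cells are all paired by support-disjoint two-edges, so that \(|E_2|=q(2q-1)(q-1)\) and \(H=0\), and add. The one difference is that you actually prove the exact-once coverage (a unique factor contains the row edge, a unique edge of that factor through the column vertex is the anchor, and \(x\) and its sign are unique because \(q\) is odd), whereas the paper simply takes this over from the description of the generator. Your count \(m(2q-2)\) of non-incidence cells also avoids a slip in the paper's displayed intermediate product, where \(2q(2q-1)\cdot 2q\) should read \(q(2q-1)\cdot 2q\).
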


\begin{proof}
The non-incidence cells number \(mn-|E_1|=2q(2q-1)\cdot2q-2q(2q-1)=2q(2q-1)(q-1)\). Each two-edge occupies two of them, supports are disjoint, and all of them are paired, so \(|E_2|=q(2q-1)(q-1)\) and \(H=0\); hence \(R(G_q)=2q(2q-1)+q(2q-1)(q-1)=q(2q-1)(q+1)=Z(n)\).
\end{proof}

Fix an anchor block \((\lambda,a)\) and put \(\widehat{\mathbb Z}_q=\mathbb Z_q\cup\{\infty\}\) with the involution \(\rho_a(\infty)=a\), \(\rho_a(a)=\infty\), \(\rho_a(t)=2a-t\) for \(t\notin\{a,\infty\}\). The intra-factor closure resolves the selected two-edges of that anchor block into \(q+1\) vectors \(q^{(\lambda)}_{a,t}\), \(t\in\widehat{\mathbb Z}_q\), such that in row \(r\in\mathbb Z_q\) the columns \(a_0,a_1\) carry \(q^{(\lambda)}_{a,r}\) and \(q^{(\lambda)}_{a,\rho_a(r)}\); both indices are retained, these vectors belonging to the anchor block \((\lambda,a)\) of a single anchor and not to the factor \(F_\lambda\) as a whole. Any two of them are orthogonal, the two cells behind them sharing a row or a column and never being the halves of one selected two-edge, which is the line rule; and for \(\lambda\ne\mu\) the alternating Hamilton cycle \(F_\lambda\cup F_\mu\) carries every \(q^{(\lambda)}_{a,t}\) to a vector orthogonal to every \(q^{(\mu)}_{b,u}\) along one of the two grounded zero families \(X^{(\lambda\mu)}_{ii}(t,u)=0\) (\(t\ne\alpha_i,\ u\ne\infty\)) and \(X^{(\lambda\mu)}_{i,i-1}(t,u)=0\) (\(t\ne\infty,\ u\ne\beta_{i-1}\)) of \cite[\S A.3]{reproducibility}. These are the statements that the certificate of Proposition~\ref{prop:even} verifies; the finite list of endpoint-valid transfer words behind them is recorded in \cite[\S A.3--A.4]{reproducibility} and is not repeated here.

\subsection{The remaining even orders}

The exceptional prime \(q=3\), that is \(n=6\), is not covered by Proposition~\ref{prop:even}: ordinary propagation on the nested \(15\times6\) witness leaves odd-cycle components unresolved, so that witness is certified only by the signed criterion \((RW3^\pm)\) \cite{reproducibility}. The value is known all the same, and more directly: \cite{ccq26sl} exhibits a cyclic \(15\times6\) grid with \(R=60\), \(|E_1|=30=z(15,6)\) and \(H=0\) whose closure does satisfy \((\mathrm{RW}3^+)\), so \(z_{RL}(15,6)=60=Z(6)\), while \(z_2(15,6)=z_{SL}(15,6)=60\) by \cite{reproducibility}; hence \(z_2=z_{SL}=z_{RL}=60\) at \(n=6\), and the signed criterion is needed only for the nested witness and not for the value.

For even \(q\), and for odd composite \(q\), Proposition~\ref{prop:even} is not available. Its certificate is the ordinary-closure certificate of \cite[Appendix A]{reproducibility}, which requires \(q\) an odd prime \(q\ge5\), the invertibility of \(3\) modulo \(q\); for odd composite \(q\) the existence of the required perfect one-factorization and of that certificate is a hypothesis of the construction and not an inference from the oddness of \(q\), and for even \(q\) the factorization alone would not suffice either. The cell bound remains an upper bound at every even order, and the values at \(q=4,6,8,9\) and \(10\), that is at \(n=8,12,16,18\) and \(20\), are nevertheless settled below by configurations of a different shape (Corollary~\ref{cor:ten} and Section~\ref{sec:examples}): two of them are obtained from the certified odd-prime constructions at \(q=7\) and \(q=11\) by star deletion and repair, and \(n=16\) from the nested construction at \(q=9\), which is not certified by the ordinary closure at all, so that there the operation repairs the certificate as well as the data. The remaining even orders are recorded in Conjecture~\ref{conj:all-n}.

\section{The odd case \(n=2p+1\)}\label{sec:odd}

Let \(n=2p+1\) and \(m=\binom{2p+1}{2}=p(2p+1)\), and set
\begin{equation}\label{eq:odd}
Z(n):=\left\lfloor\frac{n(n-1)(n+2)}4\right\rfloor
=\begin{cases}
\dfrac{p(2p+1)(2p+3)}2, & p \text{ even},\\[2mm]
\dfrac{p(2p+1)(2p+3)-1}2, & p \text{ odd}.
\end{cases}
\end{equation}
In this section we build the near-perfect one-factorization scheme of the family and determine what it does and does not certify; it is a scheme and not a proof of the value. What is proved here is the cell count \(R(G_p)\le Z(n)\) of Lemma~\ref{lem:counts-odd} together with the intra-factor orthogonality of Lemma~\ref{lem:intra-odd}. Everything else is an obstruction: the factor pairs \(F_a\cup F_b\) are Hamilton paths only when \(n\) is prime (Lemma~\ref{lem:path} and Remark~\ref{rem:path}), the transfer equations of the even case do not survive the passage to the odd cycle (Remark~\ref{rem:cross-odd-gaps}(i)), the single hole of the odd-\(p\) case is not shown to be reachable from every unresolved inner product (Remark~\ref{rem:cross-odd-gaps}(ii)), and for even \(p\) the intra-factor pairing does not exist at all (Remark~\ref{rem:even-p}). The value itself is Conjecture~\ref{conj:odd} for \(n\) prime with \(p\) odd, while the explicit configurations of the appendices settle \(n=7,9,13,17,21\) independently of the scheme.

\subsection{Cell bound}

By \eqref{eq:cell} and \(n(n-1)(n+2)=2p(2p+1)(2p+3)\),
\[
z_2\left(\binom{2p+1}{2},2p+1\right)\le\left\lfloor\frac{p(2p+1)(2p+3)}2\right\rfloor=Z(n)
=\left\lfloor\frac{n(n-1)(n+2)}4\right\rfloor,
\]
the two branches of \eqref{eq:odd} being the parity of \(p\): \(p(2p+1)(2p+3)\) is odd exactly when \(p\) is odd, and even when \(p\) is even. We stress that the expression \((n(n-1)(n+2)-1)/4\) is \emph{not} equal to \(Z(n)\): for \(n\equiv1\pmod4\) it is not an integer, and for \(n\equiv3\pmod4\) it exceeds \(Z(n)\) by \(\tfrac14\).

\subsection{Construction}

We use the near-perfect one-factorization construction of the companion paper. Vertices are indexed by \(\mathbb Z_n\). For \(\lambda\in\mathbb Z_n\), define
\[
F_\lambda=\bigl\{\{\lambda+x,\lambda-x\}:x=1,\dots,p\bigr\}.
\]
Since \(n\) is odd, \(x\ne-x\), and the \(2p\) vertices \(\lambda\pm1,\dots,\lambda\pm p\) are exactly the vertices other than \(\lambda\). Thus \(F_\lambda\) is a near-perfect matching covering all vertices except \(v_\lambda=\lambda\).

For an edge \(\{u,w\}\) there is a unique pair \((\lambda,x)\) with \(\{u,w\}=\{\lambda+x,\lambda-x\}\), namely \(\lambda=(u+w)/2\) and \(x=\pm(u-w)/2\), so the \(F_\lambda\) form a near-perfect one-factorization of \(K_n\): every edge lies in exactly one factor. The one-edge set is the incidence graph \(E_1=\{(ab,a),(ab,b):a<b\}\), of cardinality \(|E_1|=n(n-1)=2p(2p+1)=z(m,n)\); hence \(G_1\) is \(C_4\)-free and extremal.

Fix a near-perfect factor \(F_\lambda\), \(\lambda\in\mathbb Z_n\), and one of its anchor blocks, whose anchor label we write \(a\). The factors are indexed by \(\mathbb Z_n=\mathbb Z_{2p+1}\), the vertices of \(K_n\), whereas the inner labels of the \(p\) edges of a factor lie in \(\mathbb Z_p\) and are written \(g_{\lambda,a}\); these two indices are independent and are kept apart throughout. Label the \(p\) edges of \(F_\lambda\) by \(\mathbb Z_p\) so that the anchor edge \(g_{\lambda,a}\) has inner label \(a\), the anchor label of the factor; for composite \(p\) only the additive structure of the ring \(\mathbb Z_p\) is used. Let \(a_0,a_1\) be the endpoints of \(g_{\lambda,a}\). Assume first that \(p\) is odd, so that \(2\) is invertible modulo \(p\). For each \(x=1,\dots,(p-1)/2\), pair the rows \(g_{\lambda,a+x}\) and \(g_{\lambda,a-x}\), and on these two rows with columns \(a_0,a_1\), select the two complementary diagonals as two-edges. This defines the intra-factor two-edges of the anchor block \((\lambda,a)\). We note at once that the arithmetic argument of \cite[\S A.1 and A.3]{reproducibility} that we would import below uses, besides the invertibility of \(2\), the invertibility of \(3\) modulo \(p\), that is \(3(\alpha_i-\alpha_{i-1})\ne0\): it fails for distinct labels such as \(3\) and \(0\) in \(\mathbb Z_9\), so that \(p=9\), that is \(n=19\), is not covered by that argument even though the Hamilton-path obstruction of Remark~\ref{rem:path} is absent there.

\begin{remark}\label{rem:even-p}
For even \(p\) the pairing above is not defined: the map \(t\mapsto2a-t\) on \(\mathbb Z_p\) has the fixed point \(a+p/2\), so it does not pair the \(p\) rows. A different fixed-point-free pairing of the rows (or an equivalent modification of the intra-factor scheme) is required, and no such pairing is specified here; the cell counting of Lemma~\ref{lem:counts-odd} bounds \(|E_2|\) without reference to the pairing, and the transfer scheme of Conjecture~\ref{conj:cross-odd} is formulated for odd \(p\) only. Even \(p\) is therefore recorded in Conjecture~\ref{conj:all-n}, with the exceptions \(p=4,6,8,10\), that is \(n=9,13,17,21\), settled by explicit configurations of a different shape (Corollary~\ref{cor:ten}).
\end{remark}

For \(\lambda\ne\mu\) the union \(F_\lambda\cup F_\mu\) is a Hamilton path with endpoints \(\lambda,\mu\) whenever \(\gcd(\lambda-\mu,n)=1\) (Lemma~\ref{lem:path}), and a disjoint union of one path and cycles otherwise (Remark~\ref{rem:path}); all distinct factor pairs have the coprimality property precisely when \(n=2p+1\) is prime. Closing the path by a virtual edge \(e^*_{\lambda\mu}\) between \(\lambda\) and \(\mu\) gives the closed walk \(C_{\lambda\mu}=F_\lambda\cup F_\mu\cup\{e^*_{\lambda\mu}\}\) on the \(2p+1\) vertices of \(K_n\), indexed as \(c_0,\dots,c_{2p}\) with \(F_\lambda=\{c_{2i}c_{2i+1}\}\), \(F_\mu=\{c_{2i+1}c_{2i+2}\}\) for \(i=0,\dots,p-1\) and \(e^*_{ab}=c_{2p}c_0\), the index \(2p\) not being reduced modulo \(2p\). This cycle is odd, the alternation holds along the path only, and the virtual edge is not an edge of the configuration \(G_p\) although its two endpoint columns occur in every row; hence the transfer scheme of the even case, derived in \cite[\S A.3--A.4]{reproducibility} for a genuine Hamilton cycle that alternates throughout, does not apply to \(C_{\lambda\mu}\) verbatim.

The selection rule proposed for the odd case is specific to it: for each \(i=0,\dots,p-1\) a consecutive pair of edges of the path, one from \(F_\lambda\) and one from \(F_\mu\), meets in the vertex \(c_{2i+1}\), and the two endpoint columns of the anchor rows of the two factors determine two complementary diagonals, which are selected as two-edges; the rule runs over all unordered pairs \(\{\lambda,\mu\}\) of factors. It is not imported from the even case, which has no cross-factor selection stage at all: there the anchor stages already use every non-incidence cell exactly once, as recorded in Section~\ref{sec:even}. Nothing about the rule is verified here -- for odd \(p\) the counting lemma below proves only \(|E_2|\le\lfloor N_0/2\rfloor\), the disjointness of the supports of the selected two-edges is not checked, and the coverage of all but \(H\) of the non-incidence cells is part of what is conjectured in Conjecture~\ref{conj:odd}. The parity of the selection is a further obstruction, which we record because it bears on the smallest open order of the branch.

\begin{remark}\label{rem:parity-obstruction}
The selection rule above adds \emph{both} diagonals of a genuine rectangle, and every diagonal lies in exactly one genuine rectangle, so the selected two-edges occur in complementary pairs. If their supports are disjoint, such a pair occupies four distinct non-incidence cells, so \(N_0-H=2|E_2|\) is a multiple of \(4\) and \(|E_2|\le2\lfloor N_0/4\rfloor\). At \(n=11\), where \(N_0=495\) and the bound \(Z(11)=357\) would require the odd number ), this gives \(|E_2|\le2\lfloor495/4\rfloor=246\), hence \(H\ge3\) and \(R(G_p)\le356<357\): saturation by this particular prescription is impossible at \(n=11\), and an attaining configuration there must use selections that are not all complementary pairs. This is an obstruction to the scheme, and not a disproof of the numerical value conjectured in Conjecture~\ref{conj:odd}.
\end{remark}

\begin{lemma}\label{lem:path}
Let \(\lambda\ne\mu\) and \(\gcd(\lambda-\mu,n)=1\). Then \(F_\lambda\cup F_\mu\) is a Hamilton path with endpoints \(\lambda\) and \(\mu\).
\end{lemma}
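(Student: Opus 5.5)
The plan is to treat each near-perfect factor as an involution of \(\mathbb Z_n\) and read \(F_\lambda\cup F_\mu\) off the group these two involutions generate. Put \(\sigma_\lambda(v)=2\lambda-v\). By construction, \(F_\lambda\) consists exactly of the pairs \(\{v,\sigma_\lambda(v)\}\) with \(v\ne\lambda\), and \(\lambda\) is the unique fixed point of \(\sigma_\lambda\), because \(2\) is invertible modulo the odd number \(n\). The same holds for \(\sigma_\mu\) and \(F_\mu\).

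First I will fix the local structure. Every edge of \(K_n\) lies in exactly one factor, so \(F_\lambda\) and \(F_\mu\) are edge-disjoint and their union is a simple graph with \(2p\) edges on \(n=2p+1\) vertices. Every vertex other than \(\lambda,\mu\) is covered by both matchings and so has degree \(2\). The vertex \(\lambda\) is missed by \(F_\lambda\) but covered by \(F_\mu\), since \(\lambda\ne\mu\); hence it has degree \(1\), and symmetrically so does \(\mu\). A graph of maximum degree \(2\) is a disjoint union of paths and cycles. Here the only vertices of degree \(1\) are \(\lambda\) and \(\mu\), so the union is exactly one path with endpoints \(\lambda,\mu\) together with possibly some cycles.

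Second, I will rule out cycles by a connectivity argument. Each connected component of \(F_\lambda\cup F_\mu\) is invariant under both \(\sigma_\lambda\) and \(\sigma_\mu\). Indeed, \(v\) is adjacent to \(\sigma_\lambda(v)\) and to \(\sigma_\mu(v)\) whenever these differ from \(v\), and otherwise the image is \(v\) itself. Hence each component is a union of orbits of the group generated by \(\sigma_\lambda\) and \(\sigma_\mu\). This group contains the composition \(\sigma_\lambda\sigma_\mu(v)=v-2(\mu-\lambda)\), which is translation by \(d=2(\lambda-\mu)\). Since \(n\) is odd and \(\gcd(\lambda-\mu,n)=1\), we get \(\gcd(d,n)=1\). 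So translation by \(d\) generates \(\mathbb Z_n\) and already acts transitively on the vertices. The union is therefore connected. Combined with the first step, it is a single path with endpoints \(\lambda,\mu\), and it contains all \(n\) vertices, so it is a Hamilton path. As a consistency check, a connected graph with \(n\) vertices and \(n-1=2p\) edges is a tree, which agrees with the edge count.

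The only step needing care is the invariance claim in the second step, namely that a component is closed under the involutions including at their fixed points. This is immediate from the definition, so I expect no real obstacle. The argument also shows where coprimality enters. If \(g=\gcd(\lambda-\mu,n)>1\), the orbits of the translation are the cosets of \(g\mathbb Z_n\), and one can expect the extra components to be the cycles of Remark~\ref{rem:path}.
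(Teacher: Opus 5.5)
Your proof is correct and follows essentially the same route as the paper's: the reflections \(v\mapsto2\lambda-v\) and \(v\mapsto2\mu-v\), the degree count that leaves one \(\lambda\)--\(\mu\) path plus possibly some cycles, and the translation by \(2(\lambda-\mu)\) in the dihedral group they generate, which acts transitively when \(\gcd(\lambda-\mu,n)=1\). The only difference is cosmetic. You use just the invariance of every component under the group, whereas the paper identifies the component of \(\lambda\) with its orbit and also states the converse; the lemma needs only what you prove.
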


\begin{proof}
Reflection in \(\lambda\), \(v\mapsto2\lambda-v\), preserves \(F_\lambda\), and reflection in \(\mu\) preserves \(F_\mu\); both are involutions of \(\mathbb Z_n\). Every vertex other than \(\lambda\) lies in exactly one edge of \(F_\lambda\) and every vertex other than \(\mu\) in exactly one edge of \(F_\mu\), so in \(F_\lambda\cup F_\mu\) the vertices \(\lambda,\mu\) have degree \(1\) and every other vertex degree \(2\): the union is a disjoint union of paths and cycles, and \(\lambda,\mu\) lie in one component, namely the orbit of \(\lambda\) under the group generated by the two reflections. That group is dihedral of order \(2m_0\), where \(m_0\) is the order of the composition \(v\mapsto v+2(\mu-\lambda)\) in \(\mathbb Z_n\), that is \(m_0=n/\gcd(2(\mu-\lambda),n)=n/\gcd(\lambda-\mu,n)\), since \(n\) is odd. The component is a path with endpoints \(\lambda\) and \(\mu\), and it is the whole vertex set exactly when \(m_0=n\), that is exactly when \(\gcd(\lambda-\mu,n)=1\); otherwise \(F_\lambda\cup F_\mu\) is that path together with one or more cycles.

\end{proof}

\begin{remark}\label{rem:path}
The coprimality hypothesis of Lemma~\ref{lem:path} is necessary, and it is not repaired by requiring \(p\) to be prime. Take \(n=15\), so that \(p=7\) is odd and prime, and take \(\lambda=0\), \(\mu=5\), so that \(\gcd(\lambda-\mu,n)=5\). Then \(F_0\cup F_5\) has the connected components \(0-10-5\), \(1-14-11-4-6-9-1\) and \(2-13-12-3-7-8-2\), that is one path and two cycles and not a Hamilton path; the orbit of \(\lambda=0\) has \(15/5=3\) vertices, as predicted by the proof of Lemma~\ref{lem:path}. Consequently all pairs \(\lambda\ne\mu\) of factors give a Hamilton path precisely when \(n=2p+1\) is prime, so the scheme requires \(n\) prime: for composite \(n\) its cross-factor rule is applied to a closed walk that is not a Hamilton cycle, and the counting of Lemma~\ref{lem:counts-odd} does not see the difference.

\end{remark}

\begin{lemma}\label{lem:counts-odd}
Let \(N_0=p(2p+1)(2p-1)\) be the number of non-incidence cells of the grid. Then \(|E_1|=2p(2p+1)=z(m,n)\), and every simple configuration on these cells satisfies
\[
|E_2|\le\left\lfloor\frac{N_0}2\right\rfloor,\qquad
H=N_0-2|E_2|\ge N_0-2\left\lfloor\frac{N_0}2\right\rfloor,\qquad
H\equiv N_0 \pmod 2,
\]
and hence
\[
R(G_p)=|E_1|+|E_2|\le 2p(2p+1)+\left\lfloor\frac{p(2p+1)(2p-1)}2\right\rfloor
=\left\lfloor\frac{p(2p+1)(2p+3)}2\right\rfloor=Z(n),
\]
with equality if and only if \(|E_2|=\lfloor N_0/2\rfloor\). In particular \(H=1\) and \(|E_2|=\bigl(p(2p+1)(2p-1)-1\bigr)/2\) is the only way to attain the bound when \(p\) is odd, while for even \(p\) the bound requires \(H=0\) and \(|E_2|=p(2p+1)(2p-1)/2\).
\end{lemma}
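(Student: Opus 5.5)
The plan is a direct count, following the proof of Lemma~\ref{lem:counts-even} and the cell bound of Proposition~\ref{prop:cell}; nothing about the cross-factor scheme is used.

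First I will fix the one-edge set. The lemma concerns configurations whose one-edge set is the incidence graph, so \(|E_1|=n(n-1)=2p(2p+1)\), which equals \(z(m,n)\) by the classical computation of Section~\ref{sec:prelim}. The grid has \(mn=p(2p+1)^2\) cells. Subtracting the incidence cells gives
\[
N_0=p(2p+1)\bigl((2p+1)-2\bigr)=p(2p+1)(2p-1).
\]
By simplicity every two-edge occupies two non-incidence cells, and distinct selected edges have disjoint supports. Hence \(2|E_2|+H=N_0\), where \(H\ge0\) counts the uncovered non-incidence cells. This yields \(|E_2|\le\lfloor N_0/2\rfloor\), the stated lower bound on \(H\), and \(H\equiv N_0\pmod 2\).

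Next I will do the arithmetic. Since \(2|E_1|=4p(2p+1)\) is even, \(\lfloor N_0/2\rfloor+2p(2p+1)=\lfloor (N_0+4p(2p+1))/2\rfloor\). Moreover
\[
N_0+4p(2p+1)=p(2p+1)(2p-1+4)=p(2p+1)(2p+3).
\]
This gives the displayed bound, and equality holds exactly when \(|E_2|=\lfloor N_0/2\rfloor\). Its identification with \(Z(n)\) is \eqref{eq:odd}.

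Finally I will settle the parity split. The factors \(2p+1\) and \(2p-1\) are odd, so \(N_0\) has the parity of \(p\). For odd \(p\), equality forces \(H=N_0-2\lfloor N_0/2\rfloor=1\) and \(|E_2|=(N_0-1)/2\). For even \(p\), it forces \(H=0\) and \(|E_2|=N_0/2\).

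I do not expect a genuine obstacle. The only care point is the floor manipulation in the second step, which is legitimate only because \(|E_1|\) is an integer. One should also note that nothing beyond simplicity is used, so the bound holds for every simple configuration on this grid and not only for the proposed scheme.
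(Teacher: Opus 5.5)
Your proof is correct and follows the paper's own argument. It computes \(N_0=mn-|E_1|\), uses the disjointness of supports to get \(2|E_2|+H=N_0\), reads off the parity of \(N_0\) from that of \(p\), and substitutes; your explicit identity \(N_0+4p(2p+1)=p(2p+1)(2p+3)\), together with shifting the floor by the integer \(|E_1|\), simply spells out the step the paper summarizes as ``substituting the two values.''
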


\begin{proof}
The total number of cells is \(mn=p(2p+1)^2\), the one-edges occupy \(2p(2p+1)\) cells, so
\[
N_0=mn-|E_1|=p(2p+1)^2-2p(2p+1)=p(2p+1)(2p-1).
\]
Each two-edge occupies two non-incidence cells and distinct two-edges have disjoint supports, so at most \(\lfloor N_0/2\rfloor\) two-edges can be placed, leaving \(H=N_0-2|E_2|\ge N_0-2\lfloor N_0/2\rfloor\ge0\) cells uncovered. As \(N_0=p(2p+1)(2p-1)\) with \(2p\pm1\) odd, the parity of \(N_0\) is the parity of \(p\): for odd \(p\) one cell necessarily stays uncovered, \(H\ge1\), and \(|E_2|\le(N_0-1)/2\); for even \(p\), \(H\ge0\) and \(|E_2|\le N_0/2\). Substituting the two values gives the displayed identity, and \(R(G_p)=Z(n)\) precisely in the two extremal cases.
\end{proof}

\begin{remark}\label{rem:counts-odd-attained}
The bound of Lemma~\ref{lem:counts-odd} is attained by the explicit configurations at \(n=7\) (\(H=1\), \(|E_2|=52=(N_0-1)/2\)), \(n=9\) (\(|E_2|=126=N_0/2\)), \(n=13\) (\(|E_2|=429\)), \(n=17\) (\(|E_2|=1020\)) and \(n=21\) (\(|E_2|=1995\)); whether the scheme of the present section attains it is a separate question, since the pairing of the \(p\) rows of a factor by the involution \(t\mapsto2a-t\) does not exist for even \(p\) (Remark~\ref{rem:even-p}) and the cross-factor stage requires \(n\) prime (Remark~\ref{rem:path}).

\end{remark}

\subsection{Intra-factor orthogonality}

Fix \(F_\lambda\) with \(p\) odd, so that the \(p\) rows of a factor are paired by the involution \(t\mapsto2a-t\), and fix one of its anchor blocks, whose anchor label we write \(a\). With \(\widehat{\mathbb Z}_p=\mathbb Z_p\cup\{\infty\}\) and \(\rho_a(\infty)=a\), \(\rho_a(a)=\infty\), \(\rho_a(t)=2a-t\) for \(t\notin\{a,\infty\}\), the intra-factor closure resolves the selected two-edges of that anchor block into \(p+1\) vectors \(q^{(\lambda)}_{a,t}\), \(t\in\widehat{\mathbb Z}_p\), such that in row \(r\in\mathbb Z_p\) the columns \(a_0,a_1\) carry \(q^{(\lambda)}_{a,r}\) and \(q^{(\lambda)}_{a,\rho_a(r)}\); both the factor index \(\lambda\) and the anchor label \(a\) are retained, as in Section~\ref{sec:even}. The statement is independent of the cross-factor stage, but it does require that the \(p\) rows of a factor are paired, that is odd \(p\) (Remark~\ref{rem:even-p}).

\begin{lemma}\label{lem:intra-odd}
For fixed \((\lambda,a)\), the vectors \(q^{(\lambda)}_{a,t}\) are pairwise orthogonal.
\end{lemma}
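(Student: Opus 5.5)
The proof will follow the even-case argument of Section~\ref{sec:even}, which depends only on the intra-factor geometry of one anchor block and not on any cross-factor structure. Fix \((\lambda,a)\). The block lives on the \(p\) rows \(g_{\lambda,r}\), \(r\in\mathbb Z_p\), and the two columns \(a_0,a_1\). The anchor row \(g_{\lambda,a}\) is the edge \(a_0a_1\), so both of its cells in these columns are one-edges. For \(x=1,\dots,(p-1)/2\) the rows \(g_{\lambda,a\pm x}\) are disjoint from \(a_0,a_1\), because \(F_\lambda\) is a matching; their four cells in columns \(a_0,a_1\) are therefore non-incidence cells, and their two diagonals are the selected two-edges.

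\textbf{Step 1: identify the halves.} In each rectangle \(\{g_{\lambda,a+x},g_{\lambda,a-x}\}\times\{a_0,a_1\}\) both diagonals are selected two-edges. The complementary-pair rule (rule 4, sound by \eqref{eq:coeffrect}) therefore identifies the halves of each diagonal. This produces the vectors \(q^{(\lambda)}_{a,t}\). The two anchor cells \((g_{\lambda,a},a_0)\) and \((g_{\lambda,a},a_1)\) are labelled \(q_{a,a}\) and \(q_{a,\infty}\). A selected diagonal joins \((g_{\lambda,r},a_0)\) to \((g_{\lambda,2a-r},a_1)\), so the labelling \(q_{a,r}\) at \(a_0\) and \(q_{a,\rho_a(r)}\) at \(a_1\) is consistent with the involution \(\rho_a\). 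The map \(t\mapsto 2a-t\) is fixed-point-free on \(\mathbb Z_p\setminus\{a\}\) because \(p\) is odd, and this is exactly where the hypothesis of odd \(p\) enters (Remark~\ref{rem:even-p}).

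\textbf{Step 2: orthogonality of two distinct labels.} Take \(t\ne t'\) in \(\widehat{\mathbb Z}_p\). I first try to find representatives \(c\) of \(q_{a,t}\) and \(d\) of \(q_{a,t'}\) that share a row or a column and are not the two halves of one selected edge. Then the line rule gives \(c\perp_R d\), since \(\delta(c,d)=0\), and the saturation rule transfers this to the classes.

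If \(t'=\rho_a(t)\), take the two cells of row \(t\). They lie in distinct classes because each lies on a different selected diagonal, or on two distinct one-edges in the anchor row. Otherwise, take the cells in column \(a_0\) of the rows labelled \(t\) and \(t'\). When \(\infty\) is involved, use \(\rho_a\) to move to column \(a_1\) instead. In that case \(q_{a,\infty}\) sits at \((g_{\lambda,a},a_1)\), and \(q_{a,t'}\) appears in column \(a_1\) at row \(\rho_a(t')\). Since \(t'\ne\infty\), that row differs from \(a\) unless \(t'=a\), and \(t'=a\) is the row case already handled.

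\textbf{Step 3: the check I expect to be the main obstacle.} The line rule requires \(\delta(c,d)=0\), so I must verify that two cells of a common column in this block are never the two halves of a selected two-edge. Every selected diagonal joins \(a_0\) to \(a_1\), so no selected edge lies within one column. Within a row, the two cells carry labels \(r\) and \(\rho_a(r)\), which are distinct, and they are not paired with each other. The remaining difficulty is bookkeeping: the two cells of a row belong to different diagonals, so \(\delta=0\) there as well.

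This check also confirms that distinct labels really are distinct classes. That follows from \(\rho_a\) being an involution without fixed points on \(\widehat{\mathbb Z}_p\), so no identification chain merges two labels. With that settled, the closure gives pairwise orthogonality of the \(p+1\) vectors, which is Lemma~\ref{lem:intra-odd}. No transfer across factors is used.
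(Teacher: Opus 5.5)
Your proposal is correct and is essentially the paper's argument: each pair of labels is realized by two occupied cells of the anchor block that share a row or a column and are not the two halves of one selected two-edge, and the line rule (with saturation) then certifies orthogonality. The differences are cosmetic. The paper uses column \(a_0\) for every pair of finite labels, column \(a_1\) for \(\{\infty,u\}\) with \(u\ne a\), and the anchor row only for \(\{a,\infty\}\), whereas you route every \(\rho_a\)-pair through a row; and your Step~1 spells out the complementary-pair identification that the paper takes as given in the setup before the lemma.
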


\begin{proof}
If \(t,u\) are both finite, the corresponding cells lie in the same column \(a_0\). If one is \(\infty\) and the other is finite and not \(a\), they lie in column \(a_1\). If \(\{t,u\}=\{a,\infty\}\), they are the two incidence cells of the anchor row. In each case the cells share a row or column and are not the two halves of a selected two-edge, so the line rule certifies orthogonality.
\end{proof}

\subsection{Cross-factor transfer}

Let \(\lambda\ne\mu\), and let \(a\), \(b\) be anchor labels of \(F_\lambda\), \(F_\mu\). If \(F_\lambda\cup F_\mu\) is a Hamilton path, the cross-factor transfer equations of \cite[\S A.3]{reproducibility} have a candidate analogue for the closed walk \(C_{\lambda\mu}\), with the Hamilton cycle replaced by \(C_{\lambda\mu}\) and the virtual edge treated as a boundary; in the notation of the even case
\[
X^{(\lambda\mu)}_{ij}(t,u)=\langle (q^{(\lambda)}_{a,t})^{\alpha_i},(q^{(\mu)}_{b,u})^{\beta_j}\rangle,
\]
and the rectangle identity would give, as a candidate identity and not as an established transfer rule until the boundary derivation acknowledged as missing in Remark~\ref{rem:cross-odd-gaps}(i) is supplied, for available endpoints \(\epsilon,\eta\in\{0,1\}\),
\begin{equation}\label{eq:transfer-odd}
X^{(\lambda\mu)}_{ij}(t,u)=-X^{(\lambda\mu)}_{j+\eta,i-1+\epsilon}(T_{\epsilon\eta}t,U_{\epsilon\eta}u),
\end{equation}
with the affine transformations \(T_{\epsilon\eta},U_{\epsilon\eta}\) of \cite[\S A.3]{reproducibility}, together with the same-column families
\begin{equation}\label{eq:groundA-odd}
X^{(\lambda\mu)}_{ii}(t,u)=0\qquad(t\ne\alpha_i,\ u\ne\infty),
\end{equation}
\begin{equation}\label{eq:groundB-odd}
X^{(\lambda\mu)}_{i,i-1}(t,u)=0\qquad(t\ne\infty,\ u\ne\beta_{i-1}).
\end{equation}

\begin{conjecture}\label{conj:cross-odd}
Let \(n=2p+1\) be prime and \(p\) odd. For all \(i,j,t,u\), \(X_{ij}(t,u)=0\), and consequently \(G_p\) satisfies \((\mathrm{RW}3^+)\).
\end{conjecture}

\begin{remark}\label{rem:cross-odd-gaps}
Conjecture~\ref{conj:cross-odd} is not proved here, and two points are missing.
\emph{(i) The wraparound at the virtual edge.} The derivation of \cite[\S A.3--A.4]{reproducibility} transports the blocks \(X_{ij}\) along a genuine Hamilton cycle that alternates between the two factors at every position, and the index shift \(i-1\pmod p\) of \eqref{eq:transfer-odd} is a statement about that alternation. In \(C_{\lambda\mu}\) the alternation holds along the path of length \(2p\) and fails at the virtual edge, which carries neither factor and whose endpoints \(\lambda,\mu\) are the two degree-one vertices of the path. Equation~\eqref{eq:transfer-odd} and the endpoint-valid transfer words of \cite[\S A.4]{reproducibility} therefore need a new derivation with the boundary cases at \(c_{2p}\): as they stand they refer to the even-cycle formulas indexed by \(\mathbb Z_p\), in which the position \(c_{2p}c_0\) is an edge of \(F_\mu\) and one of the \(2p+1\) vertices of \(K_n\) is lost.
\emph{(ii) The hole is not a reachability argument.} For odd \(p\) the counting of Lemma~\ref{lem:counts-odd} leaves \(H\ge1\), with \(H=N_0-2|E_2|\) odd, and it is only saturation at the bound that forces exactly one uncovered cell; the proof strategy above declares a transfer chain that reaches that cell to be grounded. That is not enough: it has to be shown that \emph{every} unresolved inner product admits a transfer chain that reaches the hole or one of the grounded zero families \eqref{eq:groundA-odd}--\eqref{eq:groundB-odd}, and that this holds simultaneously for all factor pairs, whereas the two-factor transfer equations of \cite[\S A.3]{reproducibility} remain inside the pair of factors under consideration. The location of the hole is not determined by the present arguments either.
\end{remark}

For even \(p\) the counting of Lemma~\ref{lem:counts-odd} leaves \(H\ge0\) with \(H\equiv N_0\equiv0\pmod 2\), so no hole is forced, and the intra-factor pairing does not exist; neither the scheme nor the grounding argument above is then available, and that branch belongs to Conjecture~\ref{conj:all-n}.

\subsection{\((\mathrm{RW}3^+)\) for the odd scheme}

The intra-factor orthogonality of Lemma~\ref{lem:intra-odd} is unconditional. The cross-factor rigidity that would combine it with the transfer equations \eqref{eq:transfer-odd} into the terminal conditions of \((\mathrm{RW}3^+)\) is Conjecture~\ref{conj:cross-odd}. Accordingly \(G_p\) is verified to satisfy \((\mathrm{RW}3^+)\) for no order of this section, and the value of the odd branch is obtained below only at the orders at which an explicit configuration is certified.

\subsection{Status of the odd case}

The one-edge graph of the scheme is the incidence graph of \(K_{2p+1}\), hence \(C_4\)-free with \(|E_1|=z(m,n)\), and its cell count obeys the upper bound \(R(G_p)\le Z(n)\) of Lemma~\ref{lem:counts-odd}. Simplicity \((S)\) is \emph{not} established for the scheme -- the disjointness of the supports of the two-edges selected by the cross-factor rule is left open, and Lemma~\ref{lem:counts-odd} applies to simple configurations without producing one -- so neither the attainment of the bound, nor the count \(R(G_p)=Z(n)\), nor the identification of the displayed length \(R(G_p)\) with the minimal SOS rank \(\mathrm{SOS}(P_{G_p})\) is available here. What is missing is precisely the certificate: the transfer verification fails at the virtual edge of the odd cycle (Remark~\ref{rem:cross-odd-gaps}(i)), so it yields \(z_{RL}\ge Z(n)\) for no \(p\), and equality in \(R(G_p)\le Z(n)\), together with irreducibility, is exactly what a certificate has to establish. We therefore record the odd branch as a conjecture, its smallest case \(n=7\) (\(p=3\)) being settled independently by the explicit configuration of Appendix~\ref{app:217}.

\begin{conjecture}\label{conj:odd}
Let \(n=2p+1\) with \(n\) prime and \(p\ge5\) odd, and let \(m=\binom n2\). Then
\[
z_2(m,n)=z_{SL}(m,n)=z_{RL}(m,n)=Z(n)=\frac{n(n-1)(n+2)-2}4 .
\]
\end{conjecture}

The case \(p=5\), that is \(n=11\), obeys the hypotheses of Conjecture~\ref{conj:odd}; it is the smallest case of the odd branch left open after the settlement of \(n=7\), and the smallest unsettled order with \(n\equiv3\pmod4\), where the cell bound would require the odd number \(|E_2|=247\). The case \(p=3\), that is \(n=7\), is settled independently by the explicit \(21\times7\) configuration of Appendix~\ref{app:217}, whose closure satisfies \((\mathrm{RW}3^+)\) in the five-rule form of Section~\ref{sec:prelim}, the zero-companion rule being used for its single hole. The orders \(p=4,6,8,10\), that is \(n=9,13,17,21\), lie outside the scheme, the pairing of the rows not existing for even \(p\), and are settled independently by explicit configurations of a different shape (Corollary~\ref{cor:ten}); for even \(p\ge12\) the value is open, and the composite orders \(n=2p+1\) with \(p\) odd are not covered by the scheme at all (Remark~\ref{rem:path}) and are part of Conjecture~\ref{conj:all-n}.

\section{The unified formula}\label{sec:unified}

We now collect the results for the complete-graph incidence family \(m=\binom n2\), \(n\ge6\). Recall that
\[
Z(n)=\left\lfloor\frac{n(n-1)(n+2)}4\right\rfloor
\]
and that \(z(m,n)=n(n-1)\) by Section~\ref{sec:prelim}, so the cell bound of Proposition~\ref{prop:cell} reads
\[
z_2(m,n)\le Z(n)=\left\lfloor\frac{mn+z(m,n)}2\right\rfloor .
\]

\subsection{The common attainment argument}

\begin{proposition}[Uniform attainment]\label{prop:unif}
Let \(n\ge6\), \(m=\binom n2\), let \(N_0=mn-z(m,n)\) be the number of non-incidence cells and \(h=N_0\bmod 2\), and let \(G\) be a simple configuration on the \(m\times n\) grid whose one-edge set is the incidence graph of \(K_n\), that is \(C_4\)-free with \(|E_1|=z(m,n)=n(n-1)\), and whose support-disjoint two-edges pair all but \(h\) of the \(N_0\) non-incidence cells. If the closure of the five rules of \S\ref{subsec:rw3} on \(G\) satisfies the terminal conditions of \((\mathrm{RW}3^+)\), then
\begin{equation}\label{eq:unif}
z_2(m,n)=z_{SL}(m,n)=z_{RL}(m,n)=R(G)=\left\lfloor\frac{mn+z(m,n)}2\right\rfloor=Z(n).
\end{equation}
\end{proposition}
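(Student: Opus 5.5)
We prove \eqref{eq:unif} by squeezing the chain \eqref{eq:hierarchy} between a counting identity for \(R(G)\) from below and the cell bound from above.

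\textbf{Step 1: the count.} We compute \(R(G)\) exactly. The two-edges are support-disjoint and cover all but \(h\) of the \(N_0\) non-incidence cells, so \(2|E_2|=N_0-h\). Since \(h=N_0\bmod2\), this gives \(|E_2|=\lfloor N_0/2\rfloor\). Hence
\[
R(G)=z(m,n)+\lfloor N_0/2\rfloor=\left\lfloor\frac{2z(m,n)+mn-z(m,n)}2\right\rfloor=\left\lfloor\frac{mn+z(m,n)}2\right\rfloor .
\]
We then substitute \(z(m,n)=n(n-1)\) from Section~\ref{sec:prelim}. The computation \eqref{eq:cell} shows this equals \(Z(n)\). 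This is the same arithmetic as in Lemmas~\ref{lem:counts-even} and~\ref{lem:counts-odd}, now done uniformly in \(n\) without splitting into parity cases.

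\textbf{Step 2: membership in the class defining \(z_{RL}\).} We check that \(G\) is admissible for \(z_{RL}(m,n)\), item by item:
\begin{itemize}
\item \(G\) is simple, since support-disjointness of the two-edges is assumed and the one-edges sit on incidence cells, which the two-edges avoid.
\item \(G\) is limited, because \(|E_1|=z(m,n)\).
\item Its one-edge graph, the incidence graph of \(K_n\), is \(C_4\)-free, because two vertices of \(K_n\) share exactly one edge.
\item \(G\) satisfies \((\mathrm{RW}3^+)\) by hypothesis.
\end{itemize}
The closure here uses the five-rule form of \S\ref{subsec:rw3}, which includes the zero-companion rule. Proposition~\ref{prop:soundness} covers all five rules, so \(\mathrm{SOS}(P_G)=R(G)\) holds even in that form. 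Consequently \(z_{RL}(m,n)\ge R(G)\).

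\textbf{Step 3: the squeeze.} Chaining Step 2 with \eqref{eq:hierarchy}, Proposition~\ref{prop:cell} and Step 1 gives
\[
R(G)\le z_{RL}\le z_{SL}\le z_2\le\left\lfloor\frac{mn+z(m,n)}2\right\rfloor=R(G).
\]
Every inequality is therefore an equality, which is \eqref{eq:unif}.

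\textbf{Main obstacle.} The only delicate point is the five-rule issue in Step 2. The paper defines \(z_{RL}\) through \((\mathrm{RW}3^+)\) with the zero-companion rule added, while the hierarchy \eqref{eq:hierarchy} was established in \cite{reproducibility} for four rules. I would handle this by noting that the zero-companion rule is a consequence of \eqref{eq:coeffrect} with \(v_p=0\), so it is a special case of the rectangle-transfer rule once an unoccupied cell is given the zero vector. On that reading the enlarged closure certifies only facts that hold in every representation. The inclusions \(z_{RL}\le z_{SL}\le z_2\) then persist, since each comes from an irreducibility certificate being a special case of the next. Everything else in the proof is routine.
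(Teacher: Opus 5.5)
Your proposal is correct and follows the paper's proof: the same count \(R(G)=z(m,n)+\lfloor N_0/2\rfloor=\lfloor(mn+z(m,n))/2\rfloor\), the admissibility of \(G\) for \(z_{RL}\), and the squeeze \(R(G)\le z_{RL}\le z_{SL}\le z_2\le R(G)\) obtained from \eqref{eq:hierarchy} and Proposition~\ref{prop:cell}. Your remark on the five-rule closure matches the paper's own position that the zero-companion rule is a consequence of \eqref{eq:coeffrect} and not a new inference principle, with Proposition~\ref{prop:soundness} as the single irreducibility justification.
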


\begin{proof}
Put \(e=z(m,n)\). The supports of the selected edges of \(G\) are disjoint and \(|E_1|=e\), so the displayed length is
\[
R(G)=e+\frac{N_0-h}2=e+\left\lfloor\frac{mn-e}2\right\rfloor=\left\lfloor\frac{mn+e}2\right\rfloor,
\]
which is the bound of Proposition~\ref{prop:cell} and therefore \(Z(n)\) when \(e=z(m,n)=n(n-1)\); the number of holes is \(h=N_0-2|E_2|=N_0\bmod2\). The certificate makes \(G\) admissible for \(z_{RL}\), so \(z_{RL}(m,n)\ge R(G)\), and the hierarchy \eqref{eq:hierarchy} together with the cell bound of Proposition~\ref{prop:cell} closes the chain: \(R(G)\le z_{RL}(m,n)\le z_{SL}(m,n)\le z_2(m,n)\le R(G)\). Proposition~\ref{prop:soundness} is the irreducibility justification used at this single point, in place of a separate argument at each order.
\end{proof}

\begin{corollary}[Shared verification of the ten orders]\label{cor:ten}
The hypothesis of Proposition~\ref{prop:unif} is satisfied by the ten configurations recorded in the data sheets of Appendix~\ref{app:sheets}, one for each of the orders \(n=7,8,9,12,13,16,17,18,20,21\), whose provenance is collected in Table~\ref{tab:prov}. For each of them the one-edge set is the incidence graph of \(K_n\), so \(|E_1|=n(n-1)=z(m,n)\) and the one-edge graph is \(C_4\)-free and extremal; the number \(|E_1|\) of its one-edges, the number \(|E_2|=\lfloor N_0/2\rfloor\) of its support-disjoint two-edges and its number \(H=N_0-2|E_2|=N_0\bmod2\) of holes are listed in Table~\ref{tab:shape}, where \(R=|E_1|+|E_2|=Z(n)\) at every one of the ten orders; and its closure reaches a least fixed point satisfying the terminal conditions, the numbers of two-edges resolved by the line, complementary-pair and transfer rules being listed in Table~\ref{tab:certs}, together with the \(R\) equivalence classes, the \(|E_2|\) identifications and the \(\binom R2\) orthogonality facts, with no pair of occupied cells both identified and orthogonal. Consequently
\[
z_2(m,n)=z_{SL}(m,n)=z_{RL}(m,n)=Z(n)\qquad\text{for}\quad n\in\{7,8,9,12,13,16,17,18,20,21\},
\]
the ten orders of Theorem~\ref{thm:unified}(ii)--(xi), whose common proof is now the verification of these hypotheses.

That verification is a computation and not a printed count. The checker distributed with the data sheets (Appendix~\ref{app:sheets}) reads a sheet, audits its structure against the shape printed in Table~\ref{tab:shape}, replays the closure with an independent union-find structure and an independent orthogonality store, checks independently that the reported fixed point is closed -- no missing conclusion and no contradiction -- and tests the terminal conditions (a)--(c) of \cite[Definition 4.2]{reproducibility}; the ten runs reproduce every number of Tables~\ref{tab:shape} and~\ref{tab:certs}, including exactly \(R\) selected-edge classes and all \(\binom R2\) class pairs, and certify every pair of distinct selected edges, reporting no uncertified pair. The rule-by-rule resolution counts of Table~\ref{tab:certs} remain useful diagnostics; they are not ten separate rank arguments.
\end{corollary}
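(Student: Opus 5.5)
The corollary reduces to checking the hypotheses of Proposition~\ref{prop:unif} once for each of the ten configurations, after which \eqref{eq:unif} gives the values. I would arrange the verification in three layers that follow those hypotheses: shape, simplicity and extremality, then the closure certificate, then the appeal to the proposition.

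\emph{Shape layer.} For each \(n\in\{7,8,9,12,13,16,17,18,20,21\}\) the checker reads the data sheet and confirms three things.
\begin{enumerate}
\item The one-edge set is exactly \(\{(uw,u),(uw,w)\}\) over the edges \(uw\) of \(K_n\). Then \(|E_1|=n(n-1)=z(m,n)\) by the counting argument of Section~\ref{sec:prelim}, and the one-edge graph is \(C_4\)-free, since two vertices share at most one edge.
\item Every two-edge consists of two distinct non-incidence cells, and the supports are pairwise disjoint, which is condition \((S)\).
\item The number of uncovered non-incidence cells equals \(h=N_0\bmod 2\). Concretely, \(N_0=\binom n2 n-n(n-1)\) is odd only at \(n=7\), where \(N_0=105\), \(H=1\) and \(|E_2|=52\); at the other nine orders \(H=0\).
\end{enumerate}
This is exactly the arithmetic of Lemma~\ref{lem:counts-odd} and Lemma~\ref{lem:counts-even}, so \(R=Z(n)\) is immediate from the proof of Proposition~\ref{prop:unif}, and Table~\ref{tab:shape} merely records it.

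\emph{Certificate layer.} This is the real content, and I would store it in the grounded-component normal form of \S\ref{subsec:grounded}. The steps are as follows.
\begin{enumerate}
\item Run the five rules of \S\ref{subsec:rw3} to a least fixed point. Use union--find for \(\sim\) and a symmetric store for \(\perp_R\), applying the zero-companion rule only at \(n=7\) and only to the \(120\) rectangles through the hole.
\item Extract an identification prefix that merges the two halves of each of the \(|E_2|\) two-edges and never merges two selected edges, so that there are exactly \(R\) classes.
\item Build the transfer graph \(\Gamma\) on the \(\binom R2\) class pairs.
\item Exhibit a grounded root and a spanning tree in every component, so that Proposition~\ref{prop:grounded} delivers all \(\binom R2\) orthogonalities.
\end{enumerate}
Soundness of each individual step is Proposition~\ref{prop:soundness}. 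The checker must additionally confirm that no pair is both identified and orthogonal, since that would signal an inconsistent input, and that no component is accepted only through an odd cycle, since that would be the signed inference and not \((\mathrm{RW}3^+)\).

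\emph{Conclusion and main obstacle.} Once both layers pass, Proposition~\ref{prop:unif} yields \(z_2=z_{SL}=z_{RL}=Z(n)\) at all ten orders simultaneously. The main obstacle is not the logic but the existence of an accepting closure for each data set, and this is most delicate at \(n=16\) and \(n=17\). Their parent, the nested \(q=9\) witness, fails \((\mathrm{RW}3^+)\), presumably because \(3\) is not invertible modulo \(9\), so any certificate there must come from the repaired grid itself. If a replay left uncertified pairs, I would first try re-pairing the orphaned cells near the deleted stars so that each residual component of \(\Gamma\) contains a line-grounded vertex. For \(n=7\), I would check that the \(16\) pairs left open by the other four rules are exactly those grounded through the hole, using only opposite diagonals that are unselected.
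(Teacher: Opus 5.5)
Your proposal is correct and follows the paper's own route: audit each data sheet against the shape of Table~\ref{tab:shape}, replay the five-rule closure to a fixed point satisfying the terminal conditions of \((\mathrm{RW}3^+)\), and then invoke Proposition~\ref{prop:unif}. Your extraction of a grounded-component certificate is just the storage form of \S\ref{subsec:grounded} that the paper itself uses for its \(21\times7\) worked example. One small correction to your \(n=7\) check: the other four rules leave \(21\) class pairs open, in \(16\) components of \(\Gamma\); the hole grounds \(16\) pairs directly, and the remaining \(5\) follow by saturation and transfer.
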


\begin{theorem}\label{thm:unified}
Let \(m=\binom n2\) and \(n\ge6\). No part of this theorem concerns an order for which the odd construction of Section~\ref{sec:odd} is the only available witness; in particular the odd order \(n=7\) of part (ii) is settled by the explicit configuration of Appendix~\ref{app:217} and not by that construction.

\emph{(i)} If \(n=2q\) with \(q\) an odd prime, then
\[
z_2\left(\binom n2,n\right)=q(2q-1)(q+1)=\frac{n(n-1)(n+2)}4=Z(n),
\]
and \(z_{RL}\left(\binom n2,n\right)=Z(n)\) for every odd prime \(q\): for \(q\ge5\) by the nested construction of Section~\ref{sec:even}, and for \(q=3\) by the cyclic witness of \cite{ccq26sl}. Hence \(z_2=z_{SL}=z_{RL}=Z(n)\) for every odd prime \(q\).

\emph{(ii)} If \(n=7\), then
\[
z_2(21,7)=z_{SL}(21,7)=z_{RL}(21,7)=94=Z(7)=\frac{7\cdot6\cdot9-2}4 .
\]

\emph{(iii)} If \(n=8\), then \(z_2(28,8)=z_{SL}(28,8)=z_{RL}(28,8)=140=Z(8)\).

\emph{(iv)} If \(n=9\), then \(z_2(36,9)=z_{SL}(36,9)=z_{RL}(36,9)=198=Z(9)\).

\emph{(v)} If \(n=12\), then \(z_2(66,12)=z_{SL}(66,12)=z_{RL}(66,12)=462=Z(12)\).

\emph{(vi)} If \(n=13\), then \(z_2(78,13)=z_{SL}(78,13)=z_{RL}(78,13)=585=Z(13)\).

\emph{(vii)} If \(n=16\), then \(z_2(120,16)=z_{SL}(120,16)=z_{RL}(120,16)=1080=Z(16)\).

\emph{(viii)} If \(n=17\), then \(z_2(136,17)=z_{SL}(136,17)=z_{RL}(136,17)=1292=Z(17)\).

\emph{(ix)} If \(n=18\), then \(z_2(153,18)=z_{SL}(153,18)=z_{RL}(153,18)=1530=Z(18)\).

\emph{(x)} If \(n=20\), then \(z_2(190,20)=z_{SL}(190,20)=z_{RL}(190,20)=2090=Z(20)\).

\emph{(xi)} If \(n=21\), then \(z_2(210,21)=z_{SL}(210,21)=z_{RL}(210,21)=2415=Z(21)\).

In all eleven cases \(Z(n)\) is the universal cell bound.
\end{theorem}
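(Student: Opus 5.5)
The plan is to assemble Theorem~\ref{thm:unified} from results already established, so that no new combinatorics is needed. The upper bound at every order is the same: by Section~\ref{sec:prelim}, \(z(m,n)=n(n-1)\), and the cell bound \eqref{eq:cell} gives \(z_2\le Z(n)=\lfloor (mn+z(m,n))/2\rfloor\). With the hierarchy \eqref{eq:hierarchy} this reduces each part to a single lower bound \(z_{RL}\ge Z(n)\). Once that lower bound is in hand, the chain \(Z(n)\le z_{RL}\le z_{SL}\le z_2\le Z(n)\) collapses to equality.

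For part (i) I split on \(q\). If \(q\ge5\) is an odd prime, Proposition~\ref{prop:even} gives the full equality directly. It rests on the configuration \(G_q\), whose counts are those of Lemma~\ref{lem:counts-even}: \(H=0\) and \(R(G_q)=q(2q-1)(q+1)\). If \(q=3\), I take \(z_2(15,6)=z_{SL}(15,6)=60\) from \cite{reproducibility} and \(z_{RL}(15,6)=60\) from the cyclic \((\mathrm{RW}3^+)\) witness of \cite{ccq26sl}, as recalled in the subsection on the remaining even orders. Since \(Z(6)=6\cdot5\cdot8/4=60\), the three numbers agree.

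For parts (ii)--(xi) I invoke Corollary~\ref{cor:ten}, which verifies the hypotheses of Proposition~\ref{prop:unif} for the ten data-sheet configurations. In each, the one-edge set is the incidence graph, the two-edges pair all but \(N_0\bmod 2\) non-incidence cells, and the five-rule closure meets the terminal conditions. Proposition~\ref{prop:unif}, which uses Proposition~\ref{prop:soundness} for irreducibility, then yields \eqref{eq:unif} at each order. At \(n=7\) the single hole is handled by the zero-companion rule, which is sound by \eqref{eq:hole}; this is why part (ii) relies on Appendix~\ref{app:217} and not on the odd scheme. The remaining work is the arithmetic of \(Z(n)\), for example \(Z(7)=(378-2)/4=94\), \(Z(8)=8\cdot7\cdot10/4=140\), \(Z(21)=21\cdot20\cdot23/4=2415\), and likewise for the others.

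The main obstacle is not the logic but the correctness of the finite certificates behind Corollary~\ref{cor:ten}. The theorem is only as strong as the replay of the ten closures, so I would rerun the checker and audit each sheet. For each I would confirm the shape (\(|E_1|\), \(|E_2|\), \(H\)), check that the supports are disjoint, check that no pair of cells is both identified and orthogonal, and confirm that all \(\binom R2\) class pairs are orthogonal. The orders whose parent configuration is not itself certified need the most care. These are \(n=16,17\), which come from the nested \(q=9\) witness, and \(n=7\), where a fresh pairing is used. For them I would make sure the acceptance does not silently inherit an unproved property of the parent.
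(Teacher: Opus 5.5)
Your proposal is correct and is essentially the paper's own proof. The cell bound \eqref{eq:cell} and the hierarchy \eqref{eq:hierarchy} reduce every part to the lower bound \(z_{RL}\ge Z(n)\), which comes from Proposition~\ref{prop:even} for \(q\ge5\), from the cyclic witness of \cite{ccq26sl} for \(q=3\) (where citing \cite{reproducibility} for \(z_2,z_{SL}\) is redundant once \(z_{RL}(15,6)=60=Z(6)\) closes the chain), and from Corollary~\ref{cor:ten} via Proposition~\ref{prop:unif} for parts (ii)--(xi); the only difference is cosmetic, since the paper checks the closed forms of \(Z(n)\) uniformly by the residue of \(n(n-1)(n+2)\) modulo \(4\) while you evaluate them order by order.
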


\begin{proof}
Part (i) is Proposition~\ref{prop:even} together with the discussion of the remaining even orders in Section~\ref{sec:even}: for \(q\ge5\) an odd prime the nested construction satisfies \((\mathrm{RW}3^+)\), while for the exceptional prime \(q=3\), that is \(n=6\), the nested witness is certified only by the signed criterion and the matching value \(z_{RL}(15,6)=60\) is proved in \cite{ccq26sl} by a different witness certified by \((\mathrm{RW}3^+)\). Parts (ii)--(xi) are Corollary~\ref{cor:ten} applied to the ten data sheets of Appendix~\ref{app:sheets}, the common proof being the verification of the hypothesis of Proposition~\ref{prop:unif} recorded there. The arithmetic is uniform: \(4\mid n(n-1)(n+2)\) for \(n\) even and for \(n\equiv1\pmod4\), while \(n(n-1)(n+2)\equiv2\pmod4\) for \(n\equiv3\pmod4\), so the displayed expressions coincide with \(Z(n)\) in the respective ranges. No part rests on the odd construction of Section~\ref{sec:odd}, which selects \(R(G_p)\le Z(n)\) cells but is certified at no order (Remark~\ref{rem:cross-odd-gaps}): the orders \(n=2p+1\) with \(n\) prime and \(p\ge5\) odd are recorded in Conjecture~\ref{conj:odd}, the composite odd orders in Conjecture~\ref{conj:all-n}, and \(n=7\), the smallest case \(p=3\) of that branch, is settled by the configuration of Appendix~\ref{app:217} and hence is part (ii).
\end{proof}

\begin{corollary}\label{cor:parity}\label{cor:sl}
Let \(m=\binom n2\), \(n\ge6\), and let \(n\) be one of the orders settled in Theorem~\ref{thm:unified}, that is \(n=2q\) with \(q\) an odd prime, or one of \(n=7,8,9,12,13,16,17,18,20,21\). Then
\begin{equation*}
\begin{split}
z_2(m,n)&=z_{SL}(m,n)=z_{RL}(m,n)=Z(n)=\left\lfloor\frac{n(n-1)(n+2)}4\right\rfloor\\
&=\begin{cases}
\dfrac{n(n-1)(n+2)}4, & n\not\equiv3\pmod4,\\[2mm]
\dfrac{n(n-1)(n+2)-2}4, & n\equiv3\pmod4 .
\end{cases}
\end{split}
\end{equation*}
Every settled order but \(n=7\) falls in the first case and \(n=7\), the settled order with \(n\equiv3\pmod4\), in the second, where \(Z(7)=(7\cdot6\cdot9-2)/4=94\); the next order of that class, \(n=11\), is conjectural (Conjecture~\ref{conj:odd}). In particular the expression \((n(n-1)(n+2)-1)/4\), which is not even an integer when \(n\equiv3\pmod4\), never gives the value of \(z_2\left(\binom n2,n\right)\).

The equality of the three parameters holds for every odd prime \(q\ge3\): for \(q\ge5\) by Theorem~\ref{thm:unified}, and for \(q=3\), that is \(n=6\), because \(z_{RL}(15,6)=60=z_{SL}(15,6)\) by \cite{ccq26sl}; for the ten individual orders it follows from Theorem~\ref{thm:unified} and the hierarchy \eqref{eq:hierarchy}. The signed criterion \((RW3^\pm)\) is therefore not needed for any value settled here, and in particular not at \(n=7\), where the equality rests on the configuration with a hole of Appendix~\ref{app:217}. Accordingly no separation of \(z_{SL}\) and \(z_{RL}\) occurs at any order of the complete-graph incidence family settled so far; being a statement about the settled orders, this does not exclude a separation elsewhere in the family (Open Problem~\ref{op:separation}).
\end{corollary}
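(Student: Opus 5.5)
The plan is to assemble Corollary~\ref{cor:parity} from results already proved, without any new certificate. The statement has four parts: the value identity at each settled order, the arithmetic splitting of the floor by $n\bmod 4$, the classification of which settled orders fall in each branch, and the equality of $z_{SL}$ and $z_{RL}$ together with the remark on the signed criterion.

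First I fix a settled order $n$ and obtain the chain $z_2=z_{SL}=z_{RL}=Z(n)$.
\begin{itemize}
\item For $n=2q$ with $q\ge5$ an odd prime, this is Proposition~\ref{prop:even}, that is Theorem~\ref{thm:unified}(i).
\item For $n=6$, the upper bound comes from the cell bound \eqref{eq:cell}. The lower bound $z_{RL}(15,6)=60$ is the cyclic witness of \cite{ccq26sl}. The hierarchy \eqref{eq:hierarchy} then squeezes $z_{SL}$ and $z_2$ between the two equal values.
\item For the ten orders $n=7,8,9,12,13,16,17,18,20,21$, I invoke Corollary~\ref{cor:ten}, which is Proposition~\ref{prop:unif} applied to the certified data sheets.
\end{itemize}
In every case the squeeze is $R(G)\le z_{RL}\le z_{SL}\le z_2\le Z(n)$ with $R(G)=Z(n)$, so the three parameters coincide.

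Next comes the arithmetic of the floor. I write $n(n-1)(n+2)$ and reduce modulo $4$.
\begin{itemize}
\item If $n$ is even, then $n$ and $n+2$ are both even, so $4$ divides the product.
\item If $n\equiv1\pmod4$, then $4\mid n-1$.
\item If $n\equiv3\pmod4$, then $n$ and $n+2$ are odd and $n-1\equiv2\pmod4$, so the product is $\equiv2\pmod4$.
\end{itemize}
In the last case the floor equals $(n(n-1)(n+2)-2)/4$. This is exactly the case split already used in the proof of Theorem~\ref{thm:unified}.

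Checking residues, $n=6,7,8,9,12,13,16,17,18,20,21$ together with the orders $2q$ are all $\not\equiv3\pmod4$ except $n=7$. For $n=7$ one gets $(378-2)/4=94$, matching part (ii). The claim about $(n(n-1)(n+2)-1)/4$ is a congruence check:
\begin{itemize}
\item for $n\equiv3\pmod4$ the numerator is $\equiv1\pmod4$, so the expression is not an integer;
\item in the other classes the numerator is $\equiv3\pmod4$, so again it is not an integer.
\end{itemize}
Hence it can never equal an integer-valued $z_2$.

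For the signed-criterion remark, every value above was obtained from a witness satisfying $(\mathrm{RW}3^+)$, so $(RW3^\pm)$ is never invoked. The "no separation" sentence then follows directly from $z_{SL}=z_{RL}$ at each settled order. The only step needing care is $n=6$, since Proposition~\ref{prop:even} excludes it. There I rely on the external cyclic witness rather than the nested one, and no real obstacle remains.
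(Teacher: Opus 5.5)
Your proposal is correct and follows the paper's own assembly. The values come from Theorem~\ref{thm:unified}: Proposition~\ref{prop:even} for $q\ge5$, Corollary~\ref{cor:ten} for the ten orders, and for $n=6$ the cyclic witness of \cite{ccq26sl} squeezed by the hierarchy \eqref{eq:hierarchy} and the cell bound \eqref{eq:cell}; the case split is the mod-$4$ computation already in the proof of Theorem~\ref{thm:unified}, and your non-integrality check on $(n(n-1)(n+2)-1)/4$ is even slightly stronger than needed, since $n(n-1)(n+2)$ is always even, so that expression is an integer at no $n$.
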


\begin{conjecture}\label{conj:all-n}
For every integer \(n\ge6\) with \(m=\binom n2\),
\[
\begin{aligned}
z_2\left(\binom n2,n\right)&=z_{RL}\left(\binom n2,n\right)=Z(n),\\
&\text{that is}\quad z_2(m,n)=z_{RL}(m,n)=\left\lfloor\frac{mn+z(m,n)}2\right\rfloor .
\end{aligned}
\]
\end{conjecture}

The conjecture covers exactly the cases not certified above. In the even branch these are \(n=2q\) with \(q\) even and \(q\ge12\) and \(n=2q\) with \(q\) an odd composite \(q\ge15\), where the nested construction of Section~\ref{sec:even} needs a perfect one-factorization of \(K_{2q}\) together with the transfer certificate of \cite{reproducibility}; the existence of the factorization is \emph{not} the conjectural part at every such order, since for \(2q-1\) an odd prime the extension of each cyclic near-perfect matching on the \(2q-1\) vertices by the edge \(\{a,\infty\}\) at its missing vertex \(a\) gives a perfect one-factorization of \(K_{2q}\), covering \(K_8\), \(K_{12}\), \(K_{18}\) and \(K_{20}\) (\(q=4,6,9,10\)) \cite{rosa19}, and what is missing in the remainder is the pairing together with the transferred certificate rather than the existence of the factorization. In the odd branch the cases not certified above are \(n=2p+1\) with \(p\) even and \(p\ge12\), where the intra-factor row pairing of Section~\ref{sec:odd} has to be replaced (Remark~\ref{rem:even-p}), and the composite orders \(n=2p+1\) with \(p\) odd, which that scheme does not reach at all (Remark~\ref{rem:path}); the prime orders with \(p\) odd are recorded separately in Conjecture~\ref{conj:odd}. The first members of both branches are settled by explicit configurations of a different shape, the ten of Theorem~\ref{thm:unified} constructed in Section~\ref{sec:examples}; \(n=7\) is the only settled order with \(n\equiv3\pmod4\), and its configuration the only one with a hole. Outside the complete-graph incidence family we recall that \cite{reproducibility} states the identity \(z_2=z_{RL}\) on the whole family as a conjecture, verified there on the odd-prime subsequence and on small cases, and that \cite{ccq26sl} determines the six-column recursive-line numbers \(z_{RL}(m,6)\) for all \(6\le m\le16\), with \(z_2(m,6)=z_{SL}(m,6)=z_{RL}(m,6)=\lfloor(6m+z(m,6))/2\rfloor\) for \(8\le m\le16\); the entry \(m=15\) is \(z_{RL}(15,6)=60=Z(6)\), precisely the case \(n=6\) of the present family.

\section{Provenance of the ten configurations}\label{sec:examples}

This section records the bookkeeping behind Theorem~\ref{thm:unified}: the parity-dependent value of \(H\) and of the rank \(R\), and the provenance of the ten configurations, that is the parent array from which each is obtained and the change that the operation makes to its two-edges. The values themselves are stated in Theorem~\ref{thm:unified} and are not repeated; the shape of the ten configurations is in Table~\ref{tab:shape}, the output of their replays in Table~\ref{tab:certs}, and the provenance is collected in Table~\ref{tab:prov}, so that the replays need not be narrated again.

Two facts are used throughout. First, the cell bound of Proposition~\ref{prop:cell} with \(m=\binom n2\) and \(z(m,n)=n(n-1)\) reads \(z_2(m,n)\le Z(n)=\lfloor(mn+n(n-1))/2\rfloor\), and the parity of the number \(N_0=mn-z(m,n)\) of non-incidence cells decides whether a hole is forced: by Lemma~\ref{lem:counts-odd} the odd branch has \(H=1\) and \(|E_2|=(N_0-1)/2\) when \(N_0=p(2p+1)(2p-1)\) is odd, and \(H=0\) with \(|E_2|=N_0/2\) otherwise, so that at every settled order \(R=|E_1|+|E_2|=Z(n)\). Second, the odd construction of Section~\ref{sec:odd} is certified at no order: the counting of Lemma~\ref{lem:counts-odd} only bounds its displayed length by \(R(G_p)\le Z(n)\), and for even \(p\) even that bound is out of reach, the pairing of the \(p\) rows of a factor being undefined (Remark~\ref{rem:even-p}); a certificate would have to attain the bound and, through irreducibility, identify the displayed length with the minimal SOS rank. Every order settled below is therefore settled by a configuration of a different shape, one of the ten of Appendix~\ref{app:sheets}.

\subsection*{The operation and its counting}

Deleting the star of a vertex of \(K_n\) is a well-defined operation on any configuration of this family: the rows indexed by the edges through that vertex and the corresponding column are removed, and what remains is a grid of order \(n-1\). The restricted grid retains every two-edge that loses no half, leaves the cells whose partners are deleted with incomplete labels, and loses the cells of every two-edge whose two halves are both deleted. A second, non-canonical step repairs the grid: the orphaned cells are paired again and, where the inherited pairing does not close, surviving two-edges are modified as well; at the two smallest instances all occupied cells of the restricted grid are paired afresh. The re-pairing is found by a randomized local search scored by the \((\mathrm{RW}3^+)\) closure of the resulting configuration, and its outcome is recorded, together with the labels, in the data sheets. What the operation achieves is a counting fact:

\begin{lemma}[Restriction and repair: the counting part]\label{lem:repair}
On a target incidence grid of order \(n\), retain any \(k\) support-disjoint two-edges on non-incidence cells, let \(U\) be the set of all remaining non-incidence cells, and pair all but \(|U|\bmod 2\) cells of \(U\). The resulting simple configuration has displayed length \(Z(n)\), independently of the source and of the choice of the retained pairs.
\end{lemma}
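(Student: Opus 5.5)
The count is a direct consequence of the way the non-incidence cells split under the construction, so I will compute \(|E_2|\) from the partition of those cells and then reuse the identity from the proof of Proposition~\ref{prop:unif}. Let \(N_0=mn-n(n-1)\) be the number of non-incidence cells of the target grid of order \(n\), with \(m=\binom n2\), and take the one-edge set to be the incidence graph of \(K_n\). Then \(|E_1|=n(n-1)=z(m,n)\), as in Section~\ref{sec:prelim}. The \(k\) retained two-edges are support-disjoint and lie on non-incidence cells, so they cover exactly \(2k\) such cells. By definition \(U\) consists of the remaining non-incidence cells, so \(|U|=N_0-2k\).

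Since \(2k\) is even, \(|U|\equiv N_0\pmod 2\), and pairing all but \(|U|\bmod2\) cells of \(U\) adds \(\lfloor |U|/2\rfloor=(N_0-2k-h)/2\) new two-edges, where \(h=N_0\bmod 2\). I will take these new pairs to be disjoint from one another, as a pairing is, and disjoint from the retained supports, which holds because \(U\) is the complement of those supports. Hence the configuration is simple, its one-edges occupy only incidence cells, and
\[
|E_2|=k+\frac{N_0-2k-h}2=\frac{N_0-h}2=\left\lfloor\frac{N_0}2\right\rfloor,
\]
so \(k\) cancels. The number of holes is \(H=N_0-2|E_2|=h\).

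It then remains to invoke the displayed identity in the proof of Proposition~\ref{prop:unif}, or equivalently the equality case of Lemma~\ref{lem:counts-odd} in the odd case:
\[
R=n(n-1)+\left\lfloor\frac{mn-n(n-1)}2\right\rfloor=\left\lfloor\frac{mn+n(n-1)}2\right\rfloor=Z(n),
\]
by \eqref{eq:cell}. Nothing in this computation refers to the source grid or to which \(k\) pairs were kept, which is the claimed independence.

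There is no real obstacle here. The only point needing care is making explicit that the retained two-edges and the new pairs together remain support-disjoint and that no one-edge is disturbed, since the count \(|E_1|+2|E_2|+H=mn\) depends on that. I will also emphasize that the lemma is purely a counting statement: whether the closure of the five rules of \S\ref{subsec:rw3} accepts the repaired grid is a separate matter that Lemma~\ref{lem:repair} does not claim.
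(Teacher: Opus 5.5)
Your proposal is correct and is essentially the paper's own proof: you compute \(|U|=N_0-2k\), observe that \(k+\lfloor(N_0-2k)/2\rfloor=\lfloor N_0/2\rfloor\) so that \(k\) cancels, and add the \(n(n-1)=2m\) one-edges to obtain \(\lfloor(mn+n(n-1))/2\rfloor=Z(n)\). Your added remarks also match the paper's caveat that the lemma is purely a counting statement. These remarks are the support-disjointness of retained and new pairs, and the point that acceptance by the closure is not claimed.
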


\begin{proof}
Since \(|U|=N_0-2k\), the final number of two-edges is \(k+\lfloor(N_0-2k)/2\rfloor=\lfloor N_0/2\rfloor\), and there are \(2m\) one-edges, so the displayed length is \(2m+\lfloor N_0/2\rfloor=Z(n)\).
\end{proof}

This covers orphaned cells, retained holes that are made available for pairing, and cells freed by deliberately breaking surviving two-edges; it replaces repeated case-by-case counting histories. It does not prove that an arbitrary pairing is certified: acceptance must still be checked on the final target, and neither parent certification nor a general certificate-preservation theorem is assumed. The extra broken pairs reported for \(n=18\) and \(n=20\) in Table~\ref{tab:prov} fit this formulation.

\begin{table}[t]
\caption{Provenance of the ten configurations of Appendix~\ref{app:sheets}: the parent array, the number of vertex stars deleted from it, the fate of the two-edges of the parent (retained with both halves, losing one half, losing both halves), the orphaned cells re-paired and the number of two-edges newly formed, and the certificate of the parent. The column \emph{kept} records how many of the two-edges that lose no half are retained as pairs, written \(X\) of \(Y\) where the repair breaks some of them, and \emph{all \(X\to Y\)} in the two last columns means that every occupied cell of the restricted grid is paired afresh into \(Y\) two-edges in all (of which \(54\), respectively \(51\), join two orphaned cells). Occurrences of the \(n=10\) and \(n=22\) parents are the constructions of Section~\ref{sec:even} at \(q=5\) and \(q=11\), those written \(q=9\) are the nested construction at \(q=9\), which does not satisfy \((\mathrm{RW}3^+)\). The shapes of the ten configurations are listed in Table~\ref{tab:shape}.}\label{tab:prov}
{\footnotesize
\setlength{\tabcolsep}{3.5pt}
\begin{tabular}{l|rr|l|r|rrr|l|l}
\hline
data sheet & \(m\times n\) & \(n\) & parent & stars & \multicolumn{3}{c|}{two-edges of the parent} & orphaned cells & parent\\[0.5mm]
 & & & & deleted & kept & \(1\) half & \(2\) halves & re-paired & certified\\
\hline
\texttt{369.csv}   & \(36\times9\)   & \(9\)  & \(n=10\), \(q=5\)      & \(1\) & \(0\) of \(72\) & \(108\) & \(0\)   & all \(252\to126\) & yes\\
\texttt{288.csv}   & \(28\times8\)   & \(8\)  & \texttt{369.csv}       & \(1\) & \(52\)  & \(64\)  & \(10\)  & \(64\to32\)   & yes\\
\texttt{217.csv}   & \(21\times7\)   & \(7\)  & \texttt{288.csv} (grid) & \(1\) & \(1\) of \(31\) & \(43\)  & \(10\)  & all \(146\to52\) & yes\\
\texttt{7813.csv}  & \(78\times13\)  & \(13\) & \(n=14\), \(q=7\)      & \(1\) & \(312\) & \(234\) & \(0\)   & \(234\to117\) & yes\\
\texttt{6612.csv}  & \(66\times12\)  & \(12\) & \texttt{7813.csv}      & \(1\) & \(238\) & \(184\) & \(7\)   & \(184\to92\)  & yes\\
\texttt{12016.csv} & \(120\times16\) & \(16\) & \(q=9\) nested         & \(2\) & \(528\) & \(624\) & \(72\)  & \(624\to312\) & no\\
\texttt{13617.csv} & \(136\times17\) & \(17\) & \(q=9\) nested         & \(1\) & \(816\) & \(408\) & \(0\)   & \(408\to204\) & no\\
\texttt{15318.csv} & \(153\times18\) & \(18\) & \(n=22\), \(q=11\)    & \(4\) & \(557\) of \(560\) & \(1328\)& \(422\) & \(1328\to664\) & yes\\
\texttt{18920.csv} & \(190\times20\) & \(20\) & \(n=22\), \(q=11\)    & \(2\) & \(1196\) of \(1200\)& \(1020\)& \(90\)  & \(1020\to510\) & yes\\
\texttt{21021.csv} & \(210\times21\) & \(21\) & \(n=22\), \(q=11\)    & \(1\) & \(1680\)& \(630\) & \(0\)   & \(630\to315\) & yes\\
\hline
\end{tabular}}
\end{table}

Three features of Table~\ref{tab:prov} deserve to be singled out, and the rest is routine bookkeeping of the counts it lists. At \(n=18\) and \(n=20\) a few surviving two-edges are broken on purpose and their freed cells re-paired, \(3\) and \(4\) pairs respectively, so that the repaired pairing closes; the parent is certified in both instances, and these are the only occurrences in which the repair modifies a surviving pair at an order whose parent is certified. At \(n=16\) and \(n=17\) the parent is the nested construction at \(q=9\) of displayed length \(R(G)=1530\), which does \emph{not} satisfy \((\mathrm{RW}3^+)\), its closure leaving \(7344\) of the \(\binom{1530}2=1\,169\,685\) pairs of selected edges uncertified: in these two instances the operation repairs the certificate as well as the data, and the parent is not the \(153\times18\) array of Appendix~\ref{app:15318}, which is a different configuration of the same shape obtained at \(n=18\) from the certified \(q=11\) construction. And at \(n=7\) the restricted grid is combined with a pairing that keeps only \(1\) of the \(31\) two-edges inherited from Table~\ref{tab:288}; the grid has an odd number \(43\) of orphaned cells, so exactly one of them stays unoccupied, and that cell is the hole grounded by the zero-companion rule of Section~\ref{sec:prelim} (Appendix~\ref{app:217}). The order \(n=7\) is thus the first settled order with \(n\equiv3\pmod4\), the only settled order whose grid has a hole, and the only one not obtained from a certified configuration of the same parity branch; the parity of the transition is what the operation does not preserve. Two of the ten instances reproduce a known grid only, namely \(n=10\to n=9\) and, at the level of the grid alone, \(n=8\to n=7\).

\section{Conclusion}\label{sec:conclusion}

The universal cell bound \(Z(n)=\left\lfloor n(n-1)(n+2)/4\right\rfloor\) of L\"ofberg and Qi is the exact value of the second-order Zarankiewicz numbers in the complete-graph incidence family at every order for which a configuration is certified by the ordinary recursive-line criterion \((\mathrm{RW}3^+)\): the even orders \(n=2q\) with \(q\) an odd prime and \(q\ge5\), the exceptional order \(n=6\), settled in \cite{ccq26sl} by a cyclic \(15\times6\) grid in place of the one-factorization witness, and the ten individual orders \(n=7,8,9,12,13,16,17,18,20,21\), each settled by an explicit configuration whose common proof is the attainment proposition of Section~\ref{sec:unified}, resting on the reusable certificate lemma of Section~\ref{sec:prelim} and on the audit tables of Appendix~\ref{app:sheets}. At all of them \(z_2=z_{SL}=z_{RL}=Z(n)\), so the signed criterion \((RW3^\pm)\) is needed for no value. The grid bookkeeping is parity dependent, \(H=0\) for even \(n\) and for \(n\equiv1\pmod4\) and \(H=1\) for \(n\equiv3\pmod4\); the expression \((n(n-1)(n+2)-1)/4\) is the value of \(z_2\) at no order; and \(n=7\), the first settled order with \(n\equiv3\pmod4\), is the only settled order whose configuration has a hole, grounded by the zero-companion rule of Section~\ref{sec:prelim}.

The odd branch is different. The near-perfect one-factorization scheme of Section~\ref{sec:odd} is certified at no order, its three missing pieces being the Hamilton-path hypothesis, the transfer derivation across the virtual edge of the odd cycle and the reachability of a grounded zero from every unresolved inner product (Remark~\ref{rem:cross-odd-gaps}); what the scheme yields is the cell count \(R(G_p)\le Z(n)\) together with the unconditional intra-factor orthogonality, that is an upper bound on a displayed length and not a certificate. The odd orders \(n=2p+1\) with \(p\) odd are therefore conjectural, in Conjecture~\ref{conj:odd} for \(n\) prime with \(p\ge5\) and in Conjecture~\ref{conj:all-n} for the composite ones, the exception being \(n=7\). The remaining even orders, with \(q\) even and \(q\ge12\) or with \(q\) an odd composite \(q\ge15\), and the odd orders with \(p\) even and \(p\ge12\), are likewise recorded in Conjecture~\ref{conj:all-n}; the first members of these ranges are settled by the operation of Section~\ref{sec:examples}, iterated star deletion and re-pairing from the certified odd-prime constructions at \(q=7\) and \(q=11\), which uses no perfect one-factorization certificate. The question of a separation of \(z_{SL}\) and \(z_{RL}\) remains open beyond the settled orders (Open Problem~\ref{op:separation}).

\section{Open problems}\label{sec:open}

\begin{enumerate}
\item \textbf{Small \(n\).} The formula above starts at \(n\ge6\). What happens for \(n=3,4,5\)? For \(n=4\), the complete-graph incidence family gives \(z_2(6,4)=16\), while the cell bound is \(18\) \cite{reproducibility}. For \(n=5\), \(m=10\), and \(z_2(10,5)\) is not determined by the present methods.

\item \textbf{The odd case of Section~\ref{sec:odd}.}\label{op:evenp} The scheme of Section~\ref{sec:odd} is certified for no order, the missing pieces being the three discussed in Remark~\ref{rem:cross-odd-gaps}: the Hamilton-path hypothesis \(\gcd(a-b,n)=1\) (Remark~\ref{rem:path}), the transfer derivation for the odd cycle with its boundary cases at the virtual edge (Remark~\ref{rem:cross-odd-gaps}(i)), and the reachability of a grounded zero, either one of the families \eqref{eq:groundA-odd}--\eqref{eq:groundB-odd} or the single hole of the odd-\(p\) case, from every unresolved inner product (Remark~\ref{rem:cross-odd-gaps}(ii)). For even \(p\) the intra-factor row pairing does not exist (Remark~\ref{rem:even-p}), so that branch needs a different pairing as well.

\item \textbf{Composite and even \(q\) in the even case.}\label{op:evenq} For even \(n=2q\) with \(q\) not an odd prime the construction of Section~\ref{sec:even} requires a perfect one-factorization of \(K_{2q}\) (Kotzig's conjecture \cite{kotzig64}) together with a transferred certificate. Does the cell bound remain tight there? The first members of this branch, \(n=8\) (\(q=4\)), \(n=12\) (\(q=6\)), \(n=16\) (\(q=8\)), \(n=18\) (\(q=9\)) and \(n=20\) (\(q=10\)), are settled in the affirmative by the explicit configurations of Appendices~\ref{app:288}, \ref{app:6612}, \ref{app:12016}, \ref{app:15318} and \ref{app:18920}, obtained by deleting stars from a larger configuration and re-pairing the cells whose partners are thereby lost; the configuration at \(n=16\) starts from the nested construction at \(q=9\), and those at \(n=18\) and \(n=20\) from the certified construction at \(n=22\), so in particular no perfect one-factorization of \(K_{16}\), \(K_{18}\) or \(K_{20}\) is required. As at \(n=9\), all these configurations contain degenerate two-edges and therefore do not come from the one-factorization scheme. Does the cell bound remain tight for every even \(q\), and can the perfect one-factorization certificate required in the construction of Section~\ref{sec:even} be dispensed with altogether?

\item \textbf{Restriction and repair.} Fix a configuration of this family and delete the stars of one or more of its vertices, that is the rows indexed by the edges through those vertices together with the corresponding columns. Deleting stars is a well-defined operation: the restricted grid inherits every two-edge that loses no half and leaves the cells whose partners are lost with incomplete labels. A second, non-canonical step repairs the grid: it pairs those orphaned cells again and, where the inherited pairing does not close, modifies surviving two-edges as well, and at the two smallest instances, the \(36\times9\) and the \(21\times7\) configurations, it pairs all occupied cells afresh; the pairing is found by a randomized local search scored by the \((\mathrm{RW}3^+)\) closure of the result, and the counting part of the operation is Lemma~\ref{lem:repair}: the displayed length is \(Z(n)\) whatever the pairing. The operation settles further open orders, and in each of the eleven instances in which it has been tried it succeeded; the ten that produce the configurations of Appendix~\ref{app:sheets} are collected in Table~\ref{tab:prov}, and the eleventh is the one-step variant \(n=14\to n=12\) recorded in Appendix~\ref{app:6612}. Two of the ten instances only reproduce known grids, namely \(n=10\to n=9\) and \(n=8\to n=7\), and two start from a parent that is not certified, namely \(n=18\to n=17\) and \(n=18\to n=16\); the rest start from a certified configuration. What is missing is a theory. Is the repair always possible, and if so, is there a canonical choice of it in place of the randomized local search used here? Which of the remaining orders of Conjecture~\ref{conj:all-n} can be reached this way, for instance \(n=24\), starting from \(n=26\) (\(q=13\)), which Theorem~\ref{thm:unified} provides as a candidate parent without guaranteeing that the repair succeeds, or \(n=25\), for which no certified parent is available? Does the operation reach the orders \(n\equiv3\pmod4\), beginning with \(n=11\), where a cell has to stay uncovered and the zero-companion rule of Section~\ref{sec:prelim} would have to ground it?

\item \textbf{General \(m\).} The present results treat the case \(m=\binom n2\), where the extremal \(C_4\)-free skeleton is the incidence graph, uniquely. What happens for \(m<\binom n2\)? The extremal skeleton is no longer unique, and the cell bound may not be tight.

\item \textbf{Separation of \(z_{SL}\) and \(z_{RL}\).}\label{op:separation} In the complete-graph incidence family the question is settled in the negative at every order settled so far, by Corollary~\ref{cor:sl}: there \(z_{SL}=z_{RL}=Z(n)\), in particular \(z_{SL}(15,6)=z_{RL}(15,6)=60\) by \cite{ccq26sl}. This is a statement about the settled orders only. It does not exclude a separation at an order that is not settled, and it certainly does not exclude one elsewhere in the incidence family, no proof of the equality \(z_{SL}=z_{RL}\) for the family as a whole being available. The general problem remains: is \(z_{SL}(m,n)=z_{RL}(m,n)\) for all \(m,n\), or is there a pair \((m,n)\) with \(z_{SL}(m,n)>z_{RL}(m,n)\)? In analogy with the separation of \(z_A\) and \(z_L\) at \(m=n=1893\) \cite{leb26}, an explicit construction separating \(z_{SL}\) from \(z_{RL}\) would be of interest.

\item \textbf{Beyond the incidence family.} The construction relies on perfect or near-perfect one-factorizations. Are there other families where the cell bound is attained and \((\mathrm{RW}3^+)\) suffices?

\item \textbf{Asymptotic separation.} For the even-\(N\) case, L\"ofberg and Qi obtained a cubic separation between \(z_2\) and \(z_{wL}\). Does the odd-\(N\) family exhibit a similar separation?
\end{enumerate}

\section*{Acknowledgments}

This work was partially supported by Jiangsu Provincial Scientific Research Center of Applied Mathematics (Grant No.~BK20233002), Research Center for Intelligent Operations Research, The Hong Kong Polytechnic University (4-ZZT8), and the National Natural Science Foundation of China (Nos.~12171168, 12071159).

\appendix

\section{The data sheets and the convention for the replays}\label{app:sheets}

Each configuration of the following ten appendices is recorded cell by cell in a data sheet \cite{sheets}: the file \texttt{ancillary/369.csv} for the \(36\times9\) configuration, \texttt{ancillary/288.csv} for the \(28\times8\) one, and so on, the ten files together with the replay program and an index being listed in \texttt{ancillary/README.md}. A data sheet has one line per row of the grid, an empty entry standing for a one-edge cell, an integer for a two-edge half and a full stop for an unoccupied cell; the labels are the integers \(1,\dots,|E_2|\), each occurring exactly twice, so that no label is ambiguous with the full stop that marks the hole of the \(21\times7\) case. The \(36\times9\), \(28\times8\) and \(21\times7\) configurations are also printed below; the remaining seven are too large to print in full, the \(210\times21\) grid alone having \(4410\) cells, and are given in the data sheets only. Table~\ref{tab:shape} collects the shape of the ten configurations and Table~\ref{tab:certs} the output of their replays, so a reader who does not open the data sheets still knows, at every order, which cells are occupied, what the rank is and what the closure closes.

The replay conventions are as follows. All cells are occupied except in the \(21\times7\) case, whose single hole is the one forced by Lemma~\ref{lem:counts-odd}; hence \(\delta(\{p,q\})=0\) for every pair of cells unless the two cells are the halves of a selected two-edge, in particular for every pair one of whose cells is the hole. Starting from the line rule, the saturation rule, the rectangle transfer rule and the complementary-pair rule of \cite[Definitions 4.1 and 4.5]{reproducibility} are iterated, together with the zero-companion rule of Section~\ref{sec:prelim}, which for a rectangle one of whose corners is unoccupied certifies the opposite diagonal at its own prescribed value -- identifying its two halves when it is a selected two-edge and declaring them orthogonal when it is not -- to a fixed point; every rule is monotone, so the least fixed point does not depend on the order of application and all numbers below and in Tables~\ref{tab:shape} and~\ref{tab:certs} are read off it. An \emph{identification} is a pair of merged cells, an \emph{orthogonality fact} an unordered pair of distinct classes declared orthogonal, a \emph{genuine rectangle} a pair of distinct rows together with a pair of distinct columns, and a two-edge is \emph{degenerate} if its two halves lie in one row or in one column. Every replay is reported once, in Table~\ref{tab:certs}, and is reproduced by the checker; the appendices that follow record the configurations and their provenance, not a second table of counts. The ten data sheets, the checker, the expected output of a run and the SHA-256 checksums that bind this audit to the files as distributed, together with a version identifier of the package, are collected in \texttt{ancillary/README.md}; a replay obtained from files whose checksums differ may be a different package.

\begin{table}[t]
\caption{The ten configurations: data sheet, shape, number of unoccupied cells, and rank. Here \(m=\binom n2\) and \(R=|E_1|+|E_2|\); at every order \(R=Z(n)\).}\label{tab:shape}
{\footnotesize
\setlength{\tabcolsep}{5pt}
\begin{tabular}{l|rr|rrrr}
\hline
data sheet & \(m\times n\) & \(n\) & \(|E_1|\) & \(|E_2|\) & \(H\) & \(R=Z(n)\)\\
\hline
\texttt{369.csv}   & \(36\times9\)   & \(9\)  & \(72\)  & \(126\)  & \(0\) & \(198\)\\
\texttt{288.csv}   & \(28\times8\)   & \(8\)  & \(56\)  & \(84\)   & \(0\) & \(140\)\\
\texttt{217.csv}   & \(21\times7\)   & \(7\)  & \(42\)  & \(52\)   & \(1\) & \(94\)\\
\texttt{7813.csv}  & \(78\times13\)  & \(13\) & \(156\) & \(429\)  & \(0\) & \(585\)\\
\texttt{6612.csv}  & \(66\times12\)  & \(12\) & \(132\) & \(330\)  & \(0\) & \(462\)\\
\texttt{12016.csv} & \(120\times16\) & \(16\) & \(240\) & \(840\)  & \(0\) & \(1080\)\\
\texttt{13617.csv} & \(136\times17\) & \(17\) & \(272\) & \(1020\) & \(0\) & \(1292\)\\
\texttt{15318.csv} & \(153\times18\) & \(18\) & \(306\) & \(1224\) & \(0\) & \(1530\)\\
\texttt{18920.csv} & \(190\times20\) & \(20\) & \(380\) & \(1710\) & \(0\) & \(2090\)\\
\texttt{21021.csv} & \(210\times21\) & \(21\) & \(420\) & \(1995\) & \(0\) & \(2415\)\\
\hline
\end{tabular}}
\end{table}

\begin{table}[t]
\caption{Replay data of the ten configurations at the least fixed point: number of two-edges resolved by the line rule (row-degenerate or column-degenerate), by the complementary-pair rule (twice the number of genuine complementary rectangles) and by rectangle transfer, number of equivalence classes determined by the selected edges, number of identifications, and number of orthogonality facts. The last number equals \(\binom{R}{2}\) in every row, and no pair of occupied cells is both identified and orthogonal.}\label{tab:certs}
{\footnotesize
\setlength{\tabcolsep}{5pt}
\begin{tabular}{l|rrr|rrr}
\hline
data sheet & line & complementary & transfer & classes & identifications & orthogonality\\
\hline
\texttt{369.csv}   & \(51\)  & \(72\)   & \(3\)   & \(198\)  & \(126\)  & \(19\,503\)\\
\texttt{288.csv}   & \(26\)  & \(24\)   & \(34\)  & \(140\)  & \(84\)   & \(9\,730\)\\
\texttt{217.csv}   & \(11\)  & \(6\)    & \(35\)  & \(94\)   & \(52\)   & \(4\,371\)\\
\texttt{7813.csv}  & \(14\)  & \(312\)  & \(103\) & \(585\)  & \(429\)  & \(170\,820\)\\
\texttt{6612.csv}  & \(7\)   & \(168\)  & \(155\) & \(462\)  & \(330\)  & \(106\,491\)\\
\texttt{12016.csv} & \(48\)  & \(538\)  & \(254\) & \(1080\) & \(840\)  & \(582\,660\)\\
\texttt{13617.csv} & \(19\)  & \(816\)  & \(185\) & \(1292\) & \(1020\) & \(833\,986\)\\
\texttt{15318.csv} & \(106\) & \(554\)  & \(564\) & \(1530\) & \(1224\) & \(1\,169\,685\)\\
\texttt{18920.csv} & \(71\)  & \(1196\) & \(443\) & \(2090\) & \(1710\) & \(2\,183\,005\)\\
\texttt{21021.csv} & \(50\)  & \(1680\) & \(265\) & \(2415\) & \(1995\) & \(2\,914\,905\)\\
\hline
\end{tabular}}
\end{table}

\section{The explicit \(36\times9\) configuration}\label{app:369}

Table~\ref{tab:369} displays an explicit \(36\times9\) configuration of rank \(198\). Rows are indexed by the edges of \(K_9\) and columns by the vertices \(1,\dots,9\); the symbol \(\bullet\) marks a one-edge, and two equal integers mark the two halves of one two-edge. The bullets lie in the two columns of their row label, so the one-edge set is exactly the incidence graph of \(K_9\): \(|E_1|=9\cdot8=72=z(36,9)\), the one-edge graph is \(C_4\)-free, and the configuration is limited. Every integer \(1,\dots,126\) occurs exactly twice, so the two-edges have disjoint supports, \(|E_2|=126\), the two-edges occupy \(2\cdot126=252\) cells, all \(36\cdot9=324\) cells are occupied, \(H=0\), and
\[
R=|E_1|+|E_2|=72+126=198=Z(9)=\left\lfloor\frac{36\cdot9+72}2\right\rfloor .
\]

{\footnotesize
\setlength{\tabcolsep}{2.5pt}
\begin{longtable}{r|ccccccccc}
\caption{An extremal \(36\times9\) configuration. Rows are indexed by the edges of \(K_9\), columns by its vertices. The symbol \(\bullet\) denotes a one-edge; two equal integers denote the two halves of one two-edge. Data sheet \sheet{369} \cite{sheets}.}\label{tab:369}\\
\hline
\(e\in E(K_{9})\) & \(1\) & \(2\) & \(3\) & \(4\) & \(5\) & \(6\) & \(7\) & \(8\) & \(9\)\\
\hline
\endfirsthead
\hline
\(e\in E(K_{9})\) & \(1\) & \(2\) & \(3\) & \(4\) & \(5\) & \(6\) & \(7\) & \(8\) & \(9\)\\
\hline
\endhead
\hline
\multicolumn{10}{r}{\footnotesize\emph{continued on the next page}}\\
\endfoot
\hline
\endlastfoot
12 & \(\bullet\) & \(\bullet\) & 1 & 2 & 3 & 4 & 5 & 6 & 7 \\
13 & \(\bullet\) & 8 & \(\bullet\) & 9 & 6 & 10 & 11 & 12 & 13 \\
14 & \(\bullet\) & 14 & 15 & \(\bullet\) & 16 & 17 & 16 & 18 & 19 \\
15 & \(\bullet\) & 20 & 21 & 22 & \(\bullet\) & 23 & 24 & 25 & 21 \\
16 & \(\bullet\) & 26 & 27 & 28 & 29 & \(\bullet\) & 30 & 31 & 28 \\
17 & \(\bullet\) & 32 & 33 & 34 & 35 & 36 & \(\bullet\) & 37 & 32 \\
18 & \(\bullet\) & 38 & 39 & 40 & 41 & 40 & 42 & \(\bullet\) & 43 \\
19 & \(\bullet\) & 44 & 45 & 46 & 47 & 48 & 48 & 49 & \(\bullet\) \\
23 & 50 & \(\bullet\) & \(\bullet\) & 51 & 52 & 53 & 52 & 25 & 54 \\
24 & 12 & \(\bullet\) & 55 & \(\bullet\) & 55 & 24 & 23 & 56 & 57 \\
25 & 58 & \(\bullet\) & 59 & 11 & \(\bullet\) & 60 & 9 & 61 & 61 \\
26 & 62 & \(\bullet\) & 18 & 63 & 62 & \(\bullet\) & 64 & 15 & 65 \\
27 & 66 & \(\bullet\) & 67 & 49 & 68 & 67 & \(\bullet\) & 46 & 69 \\
28 & 70 & \(\bullet\) & 71 & 72 & 73 & 70 & 74 & \(\bullet\) & 75 \\
29 & 76 & \(\bullet\) & 77 & 78 & 78 & 79 & 30 & 80 & \(\bullet\) \\
34 & 81 & 82 & \(\bullet\) & \(\bullet\) & 83 & 84 & 85 & 86 & 83 \\
35 & 69 & 87 & \(\bullet\) & 88 & \(\bullet\) & 88 & 89 & 90 & 66 \\
36 & 80 & 91 & \(\bullet\) & 91 & 42 & \(\bullet\) & 41 & 76 & 19 \\
37 & 92 & 93 & \(\bullet\) & 94 & 92 & 95 & \(\bullet\) & 96 & 7 \\
38 & 97 & 98 & \(\bullet\) & 99 & 68 & 98 & 65 & \(\bullet\) & 64 \\
39 & 100 & 101 & \(\bullet\) & 63 & 101 & 37 & 102 & 36 & \(\bullet\) \\
45 & 102 & 103 & 104 & \(\bullet\) & \(\bullet\) & 105 & 100 & 105 & 106 \\
46 & 107 & 96 & 108 & \(\bullet\) & 75 & \(\bullet\) & 107 & 93 & 73 \\
47 & 109 & 110 & 39 & \(\bullet\) & 111 & 13 & \(\bullet\) & 109 & 10 \\
48 & 5 & 89 & 47 & \(\bullet\) & 45 & 112 & 87 & \(\bullet\) & 112 \\
49 & 53 & 113 & 33 & \(\bullet\) & 31 & 50 & 113 & 29 & \(\bullet\) \\
56 & 114 & 44 & 2 & 1 & \(\bullet\) & \(\bullet\) & 86 & 85 & 114 \\
57 & 115 & 43 & 79 & 116 & \(\bullet\) & 77 & \(\bullet\) & 116 & 38 \\
58 & 117 & 27 & 26 & 54 & \(\bullet\) & 84 & 117 & \(\bullet\) & 51 \\
59 & 118 & 8 & 74 & 95 & \(\bullet\) & 94 & 71 & 118 & \(\bullet\) \\
67 & 119 & 103 & 120 & 120 & 121 & \(\bullet\) & \(\bullet\) & 57 & 56 \\
68 & 115 & 122 & 106 & 35 & 34 & \(\bullet\) & 122 & \(\bullet\) & 104 \\
69 & 59 & 111 & 58 & 72 & 110 & \(\bullet\) & 123 & 123 & \(\bullet\) \\
78 & 82 & 81 & 108 & 124 & 4 & 3 & \(\bullet\) & \(\bullet\) & 124 \\
79 & 99 & 17 & 125 & 97 & 125 & 14 & \(\bullet\) & 90 & \(\bullet\) \\
89 & 121 & 22 & 126 & 20 & 119 & 60 & 126 & \(\bullet\) & \(\bullet\) \\
\end{longtable}}

The replay of this configuration is the first line of Table~\ref{tab:certs}: of its \(126\) two-edges \(51\) are degenerate and resolved by the line rule, \(72\) occur as the two diagonals of the \(36\) genuine complementary rectangles, and the remaining \(3\) (those labelled \(5\), \(6\) and \(12\) in Table~\ref{tab:369}) are resolved by rectangle transfer, so that every selected two-edge is resolved. Its grid is the one obtained from the \(n=10\) construction of Section~\ref{sec:even} at \(q=5\) by deleting the star of a vertex; all \(252\) occupied cells of the restricted grid are there paired afresh, so that none of the \(72\) inherited two-edges survives as a pair (Table~\ref{tab:prov}).

\section{The explicit \(28\times8\) configuration}\label{app:288}

Table~\ref{tab:288} displays an explicit \(28\times8\) configuration of rank \(140\). Rows are indexed by the edges of \(K_8\) and columns by the vertices \(1,\dots,8\); as before, the symbol \(\bullet\) marks a one-edge and two equal integers mark the two halves of one two-edge. The bullets lie in the two columns of their row label, so the one-edge set is exactly the incidence graph of \(K_8\): \(|E_1|=8\cdot7=56=z(28,8)\), the one-edge graph is \(C_4\)-free, and the configuration is limited. Every integer \(1,\dots,84\) occurs exactly twice, so the two-edges have disjoint supports, \(|E_2|=84\), the two-edges occupy \(2\cdot84=168\) cells, all \(28\cdot8=224\) cells are occupied, \(H=0\), and
\[
R=|E_1|+|E_2|=56+84=140=Z(8)=\left\lfloor\frac{28\cdot8+56}2\right\rfloor .
\]

The configuration is obtained from the \(36\times9\) configuration of Table~\ref{tab:369} as follows. Delete the eight rows indexed by the edges of \(K_9\) through the vertex \(9\), that is, the rows \(i9\) with \(1\le i\le8\), and the ninth column. The \(100\) deleted cells contain the \(16\) bullets of those rows and \(84\) two-edge halves; of the \(126\) two-edges of Table~\ref{tab:369}, \(52\) lose no half and \(10\) lose both halves, while the remaining \(64\) lose exactly one half. Thus the restricted grid inherits \(52\) two-edges and leaves \(64\) cells whose labels are incomplete. Table~\ref{tab:288} is the restricted grid with those \(64\) cells re-paired into \(32\) further two-edges; the re-pairing was found by a randomized local search on the space of pairings of the \(64\) cells, scored by the \((\mathrm{RW}3^+)\) closure of the resulting configuration. The \(52\) surviving two-edges of Table~\ref{tab:369} reappear in Table~\ref{tab:288}, with the labels renumbered \(1,\dots,84\) in order of first appearance, together with the \(32\) new ones.

{\footnotesize
\setlength{\tabcolsep}{2.5pt}
\begin{longtable}{r|cccccccc}
\caption{An extremal \(28\times8\) configuration. Rows are indexed by the edges of \(K_8\), columns by its vertices. The symbol \(\bullet\) denotes a one-edge; two equal integers denote the two halves of one two-edge. The configuration is obtained from the \(36\times9\) configuration of Table~\ref{tab:369} by deleting the star of a vertex and re-pairing the \(64\) orphaned cells, and is verified by the replay described in Appendix~\ref{app:288}.}\label{tab:288}\\
\hline
\(e\in E(K_{8})\) & \(1\) & \(2\) & \(3\) & \(4\) & \(5\) & \(6\) & \(7\) & \(8\)\\
\hline
\endfirsthead
\hline
\(e\in E(K_{8})\) & \(1\) & \(2\) & \(3\) & \(4\) & \(5\) & \(6\) & \(7\) & \(8\)\\
\hline
\endhead
\hline
\multicolumn{9}{r}{\footnotesize\emph{continued on the next page}}\\
\endfoot
\hline
\endlastfoot
12 & \(\bullet\) & \(\bullet\) & 1 & 2 & 3 & 4 & 5 & 6 \\
13 & \(\bullet\) & 7 & \(\bullet\) & 8 & 6 & 9 & 10 & 11 \\
14 & \(\bullet\) & 12 & 13 & \(\bullet\) & 14 & 15 & 14 & 16 \\
15 & \(\bullet\) & 17 & 18 & 19 & \(\bullet\) & 20 & 21 & 22 \\
16 & \(\bullet\) & 23 & 24 & 25 & 26 & \(\bullet\) & 27 & 28 \\
17 & \(\bullet\) & 29 & 30 & 31 & 32 & 33 & \(\bullet\) & 9 \\
18 & \(\bullet\) & 33 & 34 & 35 & 36 & 35 & 37 & \(\bullet\) \\
23 & 38 & \(\bullet\) & \(\bullet\) & 39 & 40 & 41 & 40 & 22 \\
24 & 11 & \(\bullet\) & 42 & \(\bullet\) & 42 & 21 & 20 & 43 \\
25 & 27 & \(\bullet\) & 44 & 10 & \(\bullet\) & 45 & 8 & 46 \\
26 & 47 & \(\bullet\) & 16 & 48 & 47 & \(\bullet\) & 7 & 13 \\
27 & 26 & \(\bullet\) & 49 & 28 & 50 & 49 & \(\bullet\) & 18 \\
28 & 51 & \(\bullet\) & 25 & 45 & 52 & 51 & 53 & \(\bullet\) \\
34 & 54 & 55 & \(\bullet\) & \(\bullet\) & 56 & 57 & 58 & 59 \\
35 & 60 & 61 & \(\bullet\) & 62 & \(\bullet\) & 62 & 63 & 64 \\
36 & 65 & 66 & \(\bullet\) & 66 & 37 & \(\bullet\) & 36 & 67 \\
37 & 68 & 69 & \(\bullet\) & 48 & 68 & 30 & \(\bullet\) & 70 \\
38 & 64 & 71 & \(\bullet\) & 38 & 50 & 71 & 17 & \(\bullet\) \\
45 & 43 & 72 & 41 & \(\bullet\) & \(\bullet\) & 73 & 74 & 73 \\
46 & 75 & 70 & 76 & \(\bullet\) & 60 & \(\bullet\) & 75 & 69 \\
47 & 77 & 19 & 34 & \(\bullet\) & 67 & 29 & \(\bullet\) & 77 \\
48 & 5 & 63 & 15 & \(\bullet\) & 12 & 78 & 61 & \(\bullet\) \\
56 & 74 & 44 & 2 & 1 & \(\bullet\) & \(\bullet\) & 59 & 58 \\
57 & 79 & 39 & 65 & 80 & \(\bullet\) & 46 & \(\bullet\) & 80 \\
58 & 81 & 24 & 23 & 53 & \(\bullet\) & 57 & 81 & \(\bullet\) \\
67 & 56 & 72 & 82 & 82 & 78 & \(\bullet\) & \(\bullet\) & 83 \\
68 & 79 & 84 & 52 & 32 & 31 & \(\bullet\) & 84 & \(\bullet\) \\
78 & 55 & 54 & 76 & 83 & 4 & 3 & \(\bullet\) & \(\bullet\) \\
\end{longtable}}

The replay of this configuration is the second line of Table~\ref{tab:certs}: of its \(84\) two-edges \(26\) are degenerate (namely \(18\) in a row and \(8\) in a column) and resolved by the line rule, \(24\) occur as the two diagonals of the \(12\) complementary rectangles, and the remaining \(34\), all nondegenerate, are resolved by rectangle transfer. It is produced from the \(36\times9\) configuration of Table~\ref{tab:369} by the operation of \S\ref{sec:examples} (Table~\ref{tab:prov}), and its grid is in turn the source of the grid of the \(21\times7\) configuration of Table~\ref{tab:217} (Appendix~\ref{app:217}).

\section{The explicit \(21\times7\) configuration}\label{app:217}

Table~\ref{tab:217} displays an explicit \(21\times7\) configuration of rank \(94\). Rows are indexed by the edges of \(K_7\) and columns by the vertices \(1,\dots,7\); the symbol \(\bullet\) marks a one-edge, two equal integers mark the two halves of one two-edge, and \(\circ\) marks the single unoccupied cell. The bullets lie in the two columns of their row label, so the one-edge set is exactly the incidence graph of \(K_7\): \(|E_1|=7\cdot6=42=z(21,7)\), the one-edge graph is \(C_4\)-free, and the configuration is limited. Every integer \(1,\dots,52\) occurs exactly twice, so the two-edges have disjoint supports, \(|E_2|=52\), the two-edges occupy \(2\cdot52=104\) cells, one cell stays unoccupied, \(H=1\), and
\[
R=|E_1|+|E_2|=42+52=94=Z(7)=\left\lfloor\frac{21\cdot7+42}2\right\rfloor=\frac{7\cdot6\cdot9-2}4 .
\]
This is the cell count forced by the parity of \(p=3\): the non-incidence cells number \(N_0=3\cdot7\cdot5=105\), which is odd, so by Lemma~\ref{lem:counts-odd} at least one of them stays uncovered, and \(H=1\) together with \(|E_2|=52=(N_0-1)/2\) is the only way to attain the bound.

The \emph{grid} of Table~\ref{tab:217} is obtained from the \(28\times8\) configuration of Table~\ref{tab:288} as follows. Delete the seven rows indexed by the edges of \(K_8\) through the vertex \(8\), that is, the rows \(i8\) with \(1\le i\le7\), and the eighth column. The \(77\) deleted cells contain the \(14\) bullets of those rows and \(63\) two-edge halves; of the \(84\) two-edges of Table~\ref{tab:288}, \(31\) lose no half and \(10\) lose both halves, while the remaining \(43\) lose exactly one half. Thus the restricted grid inherits \(31\) two-edges and leaves \(43\) cells whose labels are incomplete, an odd number, so exactly one of them has to stay unoccupied. Table~\ref{tab:217} has exactly the \(147\) cells of that restricted grid: \(42\) bullets, \(104\) halves of \(52\) two-edges, and the single unoccupied cell, in the row indexed by \(\{2,7\}\) and in the column of the vertex \(4\), which is one of the \(43\) cells whose label is incomplete. Its pairing, however, is \emph{not} the pairing inherited from Table~\ref{tab:288}: only \(1\) of the \(31\) two-edges that lose no half survives as a two-edge of Table~\ref{tab:217}, the other \(30\) are broken, and the \(146\) occupied cells of the restricted grid are paired afresh by a randomized local search on the space of pairings, scored by the \((\mathrm{RW}3^+)\) closure of the resulting configuration. The pairing of Table~\ref{tab:217} therefore keeps \(1\) of the \(31\) inherited two-edges and forms \(51\) new ones, of which \(6\) pair two of the \(43\) orphaned cells, the remaining orphaned cell being the hole; the labels are the integers \(1,\dots,52\) in order of first appearance. No relabelling of the seven vertices improves the correspondence: over all \(8\cdot7!\) ways of choosing the deleted vertex and relabelling the other seven, the largest number of the inherited two-edges that can be read off as two-edges of Table~\ref{tab:217} is \(7\). In short, Table~\ref{tab:217} is obtained from Table~\ref{tab:288} by deleting the star of a vertex and then pairing the occupied cells of the restricted grid afresh, and not by re-pairing the orphaned cells alone.

{\footnotesize
\setlength{\tabcolsep}{3.5pt}
\begin{longtable}{r|ccccccc}
\caption{An extremal \(21\times7\) configuration. Rows are indexed by the edges of \(K_7\), columns by its vertices. The symbol \(\bullet\) denotes a one-edge, two equal integers denote the two halves of one two-edge, and \(\circ\) denotes the single unoccupied cell (the hole). The grid, that is the rows, the columns, the one-edges and the single hole, is the one produced from the \(28\times8\) configuration of Table~\ref{tab:288} by deleting the star of the vertex \(8\); the occupied cells are then paired afresh by a local search scored by the \((\mathrm{RW}3^+)\) closure, \(1\) of the \(31\) inherited two-edges being kept. The configuration is verified by the replay described in Appendix~\ref{app:217}.}\label{tab:217}\\
\hline
\(e\in E(K_{7})\) & \(1\) & \(2\) & \(3\) & \(4\) & \(5\) & \(6\) & \(7\)\\
\hline
\endfirsthead
\hline
\(e\in E(K_{7})\) & \(1\) & \(2\) & \(3\) & \(4\) & \(5\) & \(6\) & \(7\)\\
\hline
\endhead
\hline
\multicolumn{8}{r}{\footnotesize\emph{continued on the next page}}\\
\endfoot
\hline
\endlastfoot
12 & \(\bullet\) & \(\bullet\) & 1 & 2 & 3 & 4 & 5 \\
13 & \(\bullet\) & 6 & \(\bullet\) & 1 & 7 & 8 & 9 \\
14 & \(\bullet\) & 10 & 8 & \(\bullet\) & 11 & 12 & 13 \\
15 & \(\bullet\) & 14 & 15 & 16 & \(\bullet\) & 17 & 14 \\
16 & \(\bullet\) & 18 & 19 & 20 & 21 & \(\bullet\) & 22 \\
17 & \(\bullet\) & 22 & 23 & 24 & 24 & 25 & \(\bullet\) \\
23 & 26 & \(\bullet\) & \(\bullet\) & 25 & 12 & 7 & 3 \\
24 & 27 & \(\bullet\) & 28 & \(\bullet\) & 29 & 6 & 30 \\
25 & 31 & \(\bullet\) & 32 & 32 & \(\bullet\) & 33 & 34 \\
26 & 35 & \(\bullet\) & 36 & 35 & 23 & \(\bullet\) & 37 \\
27 & 38 & \(\bullet\) & 39 & \(\circ\) & 11 & 40 & \(\bullet\) \\
34 & 40 & 37 & \(\bullet\) & \(\bullet\) & 41 & 38 & 42 \\
35 & 43 & 44 & \(\bullet\) & 45 & \(\bullet\) & 31 & 5 \\
36 & 42 & 46 & \(\bullet\) & 46 & 30 & \(\bullet\) & 29 \\
37 & 41 & 9 & \(\bullet\) & 17 & 33 & 16 & \(\bullet\) \\
45 & 47 & 43 & 27 & \(\bullet\) & \(\bullet\) & 2 & 20 \\
46 & 34 & 48 & 21 & \(\bullet\) & 48 & \(\bullet\) & 36 \\
47 & 44 & 26 & 15 & \(\bullet\) & 49 & 49 & \(\bullet\) \\
56 & 50 & 19 & 10 & 13 & \(\bullet\) & \(\bullet\) & 4 \\
57 & 51 & 39 & 47 & 50 & \(\bullet\) & 45 & \(\bullet\) \\
67 & 28 & 51 & 52 & 18 & 52 & \(\bullet\) & \(\bullet\) \\
\end{longtable}}

\subsection*{The worked example of the certificate}

Here one cell is unoccupied, so the closure is built on the set of occupied cells and the zero-companion rule of Section~\ref{sec:prelim} participates: a rectangle one of whose corners is the hole has the diagonal through the hole of value \(0\), and its opposite diagonal is certified at its own prescribed value, identified when it is a selected two-edge and declared orthogonal when it is not. Of the \(120\) opposite diagonals none is a selected two-edge, so only the second case occurs. The replay is the third line of Table~\ref{tab:certs}: of the \(52\) two-edges \(11\) are degenerate (namely \(8\) in a row and \(3\) in a column) and resolved by the line rule, \(6\) occur as the two diagonals of the \(3\) complementary rectangles, and the remaining \(35\), all nondegenerate, are resolved by rectangle transfer; every selected two-edge is thereby resolved, and the \(94\) selected edges determine \(94\) distinct equivalence classes, none of them the hole. The hole lies in the row indexed by the edge \(\{2,7\}\) and in the column of the vertex \(4\).

This is the configuration for which the certificate needs the zero-companion rule, and it is the worked example of the grounded-component normal form of \S\ref{subsec:grounded}. The quotient graph \(\Gamma\) of \eqref{eq:X} has \(\binom{94}2=4371\) variables and \(322\) components. Without the hole grounds \(16\) of its components remain ungrounded -- twelve of size \(1\), three of size \(2\) and one of size \(3\), that is \(21\) variables -- and the closure of the other four rules stops exactly there, at \(4350\) of the \(4371\) pairs. The \(16\) pairs that a rectangle with the hole as a corner grounds directly are the pairs consisting of a class with a cell in the row of the hole and a class with a cell in its column, and the remaining \(5\) follow from them by saturation and rectangle transfer; those grounds reach all \(16\) components, so that with the fifth rule the closure reaches its least fixed point at the \(\binom{94}2=4371\) certified pairs of Table~\ref{tab:certs}. The checker \texttt{ancillary/certificate\_checker.py} treats the hole in the corrected form of Section~\ref{sec:prelim}, reports the \(120\) rectangles the rule addresses and no uncertified pair, and reproduces every number above.

\section{The explicit \(78\times13\) configuration}\label{app:7813}

The configuration is recorded cell by cell in the data sheet \sheet{7813} \cite{sheets}; it has the edges of \(K_{13}\) as rows and its vertices as columns, the incidence graph of \(K_{13}\) as one-edge set with \(|E_1|=156=z(78,13)\), \(429\) two-edges with pairwise disjoint supports, no hole, and rank \(R=585=Z(13)\), as listed in Table~\ref{tab:shape}, and its replay data are listed in Table~\ref{tab:certs}. It is obtained from the certified \(n=14\) construction of Section~\ref{sec:even} at \(q=7\) by deleting the star of the vertex \(\infty\), that is the \(13\) rows indexed by its edges together with the corresponding column: of the \(546\) two-edges of that construction, \(312\) lose no half and \(234\) lose exactly one half, while none loses both halves. The restricted grid inherits the \(312\) two-edges and its \(234\) orphaned cells are re-paired into \(117\) further two-edges by a randomized local search scored by the \((\mathrm{RW}3^+)\) closure; the labels are renumbered \(1,\dots,429\) in order of first appearance, and the counts are collected in Table~\ref{tab:prov}.

\section{The explicit \(66\times12\) configuration}\label{app:6612}

The configuration is recorded in the data sheet \sheet{6612} \cite{sheets}; it has \(|E_1|=132=z(66,12)\), \(330\) disjoint two-edges, no hole and rank \(R=462=Z(12)\), as listed in Tables~\ref{tab:shape} and~\ref{tab:certs}. It is obtained from the \(78\times13\) configuration of the data sheet \sheet{7813} by iterating once more the same operation, deleting the star of a vertex of \(K_{13}\) together with its column: of the \(429\) two-edges of that configuration, \(238\) lose no half, \(184\) lose exactly one half and \(7\) lose both halves, so the restricted grid inherits \(238\) two-edges and its \(184\) orphaned cells are re-paired into \(92\) further two-edges (Table~\ref{tab:prov}). This order is treated after \(n=13\) because of that dependence; the operation uses no perfect one-factorization certificate at all here, its only input being the certified odd-prime case \(q=7\). The same value is also obtained in one step, by deleting the stars of two vertices of the \(n=14\) construction at once and repairing the \(324\) orphaned cells; both witnesses pass the same replay.

\section{The explicit \(120\times16\) configuration}\label{app:12016}

The configuration is recorded in the data sheet \sheet{12016} \cite{sheets}; it has \(|E_1|=240=z(120,16)\), \(840\) disjoint two-edges, no hole and rank \(R=1080=Z(16)\), as listed in Tables~\ref{tab:shape} and~\ref{tab:certs}. It is obtained from the nested construction at \(q=9\), of displayed length \(R(G)=1530\), by deleting the stars of two vertices of \(K_{18}\) (the \(33\) rows indexed by the edges through them and the two corresponding columns): of the \(1224\) two-edges of that configuration, \(528\) lose no half, \(624\) lose exactly one half and \(72\) lose both halves, so the restricted grid inherits \(528\) two-edges and its \(624\) orphaned cells are re-paired into \(312\) further two-edges (Table~\ref{tab:prov}). The parent is the nested witness of order \(18\), which is \emph{not} certified by \((\mathrm{RW}3^+)\): its closure leaves \(7344\) of the \(\binom{1530}2=1\,169\,685\) pairs of selected edges uncertified. It is also not the \(153\times18\) array of Appendix~\ref{app:15318}, which is a different configuration of the same shape, obtained there from the certified \(q=11\) construction; here the operation therefore repairs the certificate as well as the data.

\section{The explicit \(136\times17\) configuration}\label{app:13617}

The configuration is recorded in the data sheet \sheet{13617} \cite{sheets}; it has \(|E_1|=272=z(136,17)\), \(1020\) disjoint two-edges, no hole and rank \(R=1292=Z(17)\), as listed in Tables~\ref{tab:shape} and~\ref{tab:certs}. It is obtained from the same nested construction at \(q=9\) as Appendix~\ref{app:12016}, again not the array of Appendix~\ref{app:15318}, by deleting the star of a vertex of \(K_{18}\): of the \(1224\) two-edges of that configuration, \(816\) lose no half and \(408\) lose exactly one half, none losing both halves, so the restricted grid inherits the \(816\) two-edges and its \(408\) orphaned cells are re-paired into \(204\) further two-edges (Table~\ref{tab:prov}); the parent here is not certified either.

\section{The explicit \(153\times18\) configuration}\label{app:15318}

The configuration is recorded in the data sheet \sheet{15318} \cite{sheets}; it has \(|E_1|=306=z(153,18)\), \(1224\) disjoint two-edges, no hole and rank \(R=1530=Z(18)\), as listed in Tables~\ref{tab:shape} and~\ref{tab:certs}. It is obtained from the certified \(231\times22\) configuration of Section~\ref{sec:even} at \(q=11\), that is the case \(n=22\) of Theorem~\ref{thm:unified}, by deleting the stars of four vertices of \(K_{22}\) (the \(78\) rows indexed by the edges through them and the four corresponding columns): of the \(2310\) two-edges of that configuration, \(560\) keep both halves, \(1328\) lose exactly one half and \(422\) lose both halves. The restricted grid retains \(557\) of the \(560\); its \(1328\) orphaned cells are re-paired into \(664\) further two-edges and the \(6\) cells of the \(3\) surviving two-edges that are broken are re-paired into \(3\) more, giving \(557+664+3=1224\) two-edges (Table~\ref{tab:prov}). No hypothesis on \(K_{18}\) itself is used: the only certified input is the odd-prime case \(q=11\).

\section{The explicit \(190\times20\) configuration}\label{app:18920}

The configuration is recorded in the data sheet \sheet{18920} \cite{sheets}; it has \(|E_1|=380=z(190,20)\), \(1710\) disjoint two-edges, no hole and rank \(R=2090=Z(20)\), as listed in Tables~\ref{tab:shape} and~\ref{tab:certs}. It is obtained from the certified \(231\times22\) configuration at \(q=11\) by deleting the stars of two vertices of \(K_{22}\) (the \(41\) rows indexed by the edges through them and the two corresponding columns): of the \(2310\) two-edges of that configuration, \(1200\) keep both halves, \(1020\) lose exactly one half and \(90\) lose both halves. The restricted grid retains \(1196\) of the \(1200\); its \(1020\) orphaned cells are re-paired into \(510\) further two-edges and the \(8\) cells of the \(4\) surviving two-edges that are broken are re-paired into \(4\) more, giving \(1196+510+4=1710\) two-edges (Table~\ref{tab:prov}).

\section{The explicit \(210\times21\) configuration}\label{app:21021}

The configuration is recorded in the data sheet \sheet{21021} \cite{sheets}; it has \(|E_1|=420=z(210,21)\), \(1995\) disjoint two-edges, no hole and rank \(R=2415=Z(21)\), as listed in Tables~\ref{tab:shape} and~\ref{tab:certs}. It is obtained from the certified \(231\times22\) configuration at \(q=11\) by deleting the star of a vertex of \(K_{22}\) (the \(21\) rows indexed by its edges and the corresponding column): of the \(2310\) two-edges of that configuration, \(1680\) keep both halves and \(630\) lose exactly one half, none losing both, so the restricted grid inherits the \(1680\) two-edges and its \(630\) orphaned cells are re-paired into \(315\) further two-edges, giving \(1680+315=1995\) (Table~\ref{tab:prov}). No inherited two-edge is modified in this instance.

\end{document}